\numberwithin{equation}{section}
\newtheorem{theorem}{Theorem}[section]
\newtheorem{thm}[theorem]{Theorem}
\newtheorem{lem}[theorem]{Lemma}
\newtheorem{proposition}[theorem]{Proposition}
\newtheorem{prop}[theorem]{Proposition}
\newtheorem{corollary}[theorem]{Corollary}
\newtheorem{assumption}[theorem]{Assumption}
\theoremstyle{definition}
\newtheorem{defn}[theorem]{Definition}
\theoremstyle{remark}
\newtheorem{rem}[theorem]{Remark}
\numberwithin{equation}{section}
 \DeclareMathAlphabet{\mathpzc}{OT1}{pzc}{m}{it}
 \def\dif{{\mathord{{\rm d}}}}
 \newcommand{\E}{\mathbb{E}}            
 \newcommand{\e}{\varepsilon}
 \newcommand{\Ll}{\langle}
 \newcommand{\Rr}{\rangle}
 \newcommand{\N}{\mathbb{N}}
 \newcommand{\R}{\mathbb{R}}
 \newcommand{\PP}{\mathbb{P}}
 \newcommand{\mcl}{\mathcal}
 \newcommand{\Be}{\begin{equation}}
 \newcommand{\Ee}{\end{equation}}
 \newcommand{\Bs}{\begin{split}}
 \newcommand{\Es}{\end{split}}
  \newcommand{\Bes}{\begin{equation*}}
 \newcommand{\Ees}{\end{equation*}}
 \newcommand{\BT}{\begin{thm}}
 \newcommand{\ET}{\end{thm}}
 \newcommand{\Bp}{\begin{proof}}
 \newcommand{\Ep}{\end{proof}}
 \newcommand{\BL}{\begin{lem}}
 \newcommand{\EL}{\end{lem}}
 \newcommand{\BP}{\begin{proposition}}
 \newcommand{\EP}{\end{proposition}}
 \newcommand{\BC}{\begin{corollary}}
 \newcommand{\EC}{\end{corollary}}
 \newcommand{\BR}{\begin{rem}}
 \newcommand{\ER}{\end{rem}}
 \newcommand{\BD}{\begin{defn}}
 \newcommand{\ED}{\end{defn}}
 \newcommand{\BI}{\begin{itemize}}
 \newcommand{\EI}{\end{itemize}}
 \newcommand{\eqn}{equation}
 \newcommand{\tl}{\tilde}
 \newcommand{\bg}{\big}
 \newcommand{\re}{{\rm e}}
\begin{document}
\title
[Exponential mixing of 2D SDEs forced by degenerate L\'evy noises]{Exponential mixing of 2D SDEs forced by degenerate L\'evy noises}

\author[L. Xu]{Lihu Xu}
\address{Department of Mathematics, Brunel University,
Kingston Lane,
Uxbridge,
Middlesex UB8 3PH, United Kingdom}
\email{xulihu2007@gmail.com}

\maketitle
\begin{abstract} \label{abstract}
\noindent We modify the coupling method established in \cite{Shi08} and develop
a technique to prove the exponential mixing of a 2D stochastic system
forced by degenerate L\'evy noises. In particular, these L\'evy noises include
$\alpha$-stable noises ($0<\alpha<2$). Thanks to the stimulating discussion \cite{Ner11}, this technique is promising to study
the exponential mixing problem of SPDEs driven by degenerate \emph{symmetric} $\alpha$-stable noises.
\end{abstract}

\section{Introduction}
We shall study in this paper the exponential ergodicity of degenerate stochastic evolution
equation
\begin{\eqn} \label{e:MaiSPDE}
\begin{cases}
\dif X_1(t)=[-\lambda_1 X_1(t)+F_1(X(t))]\dif t+\dif z(t), \\
\dif X_2(t)=[-\lambda_2 X_2(t)+F_2(X(t))]\dif t
\end{cases}
\end{\eqn}
where $X(t)=(X_1(t), X_2(t))^T \in \R^2$ for every $t \ge 0$, $\lambda_2, \lambda_1>0$,
$F: \R^2 \rightarrow \R^2$ is bounded and Lipschitz,
$z(t)$ is a one dimensional L\'evy process satisfying Assumption \ref{a:} below.
We often simply write the above equation as the following form:
\Be \label{e:SimEqnFor}
\dif X(t)=[AX(t)+F(X(t))]\dif t+\dif Z_t,
\Ee
where $A=diag\{-\lambda_1, -\lambda_2\}$ and $Z_t=[z(t),0]^{T}$.

Since the end of the last century, the ergodicity of stochastic systems forced by degenerate noises
has also been intensively studied, see \cite{EMat01, Hai02, HM06, HaiMat11} for the SPDEs with degenerate Wiener noises and
\cite{KS-2001, KS-2002, KS10, Shi04, Shi06, Shi08, nersesyan-2008} for those forced by kick noises. However, there seems no ergodicity result for the stochastic systems driven by
degenerate L\'evy
jump noises. To our knowledge, this paper seems the first one in this direction.

The main novelty of the present paper is that we obtain
the exponential ergodicity for a family of 2D SDEs driven by
a large class of degenerate L\'evy jump noises which include
$\alpha$-stable noises with $0<\alpha<2$. In \cite{KS-2001, KS-2002, KS10, Shi04, Shi06, Shi08}, the authors assumed that
the kick noises come periodically and are bounded or with exponential moments.
\cite{nersesyan-2008} studied polynomial mixing for the complex Ginzburg-Landau equation
driven by random kick noises with all $p>0$ moments. Clearly,
all these assumptions in the above literatures rule out the interesting L\'evy noises only
with some $p>0$ moment such as $\alpha$-stable noises.

Let us also compare our ergodicity result with those known for
SDEs and SPDEs forced by L\'evy noises.
\cite{PSXZ11} established the exponential mixing for a family of SPDEs with a form similar to Eq. \eqref{e:SimEqnFor}
under total variational norm,
provided that the noises are \emph{non-degenerate} $\alpha$-stable with $1<\alpha<2$.
The non-degeneracy assumption and the regime of $\alpha \in (1,2)$ are
crucial for getting the strong
Feller property and applying the Lyapunov function technique.
The new point in the present paper is that our noises are degenerate and can be
$\alpha$-stable with $0<\alpha \le 1$. It is well known that $\alpha=1$ is a critical point
of $\alpha$-stable noises and $\alpha$-stable type operators. Many nice results in the case of $\alpha>1$ can not be extended to the case $\alpha \in (0,1]$ (\cite{Pr12}).
 \cite{Ku09} established some nice criteria of the exponential mixing
(under total variation norm) for a family of finite dimensional SDEs driven by jump noises which
include some one dimensional equations driven by $\alpha$-stable noises.

Our approach is by
modifying the coupling method established in \cite{Shi08, Shi04}, which
has been applied to study the ergodicity problems of
many degenerate stochastic systems (\cite{KS-2001, KS-2002, KS10, Shi04, Shi06, Shi08, nersesyan-2008}). 
Roughly speaking, we follow the idea in \cite{Shi08, Shi04} to split the dynamics into two parts, one with noises and the other 
with strong dissipation. We apply a maximal coupling for the part with noises to mix the coupling chain and take advantage of strong  
dissipation to control the part without noises. There are two different points between our modification and the methods in \cite{Shi04, Shi08}. 
We sample the solution Markov chain according to the moment that a jump larger than $K$ comes (see Section \ref{s:ConCou}), 
while the Markov chains in \cite{Shi04} are sampled periodically thanks to their special periodical kick noises. To handle this new 
random effect, we need to estimate some more complicated stopping times in sequel. The other point is that we use the jumps larger than 
$K$ to construct the coupling chains, while \cite{Shi08, Shi04} take the advantage of small jumps.

It is natural to ask
whether our exponential ergodicity result can be extended to SPDEs forced by finite dimensional L\'evy noises such as $z(t)=\sum_{k=1}^n z_k(t)$ with $z_1(t),...,z_n(t)$ being a sequence of independent purely jump noises. Unfortunately, it seems our technique is not applicable even for the case of 3d SDEs driven by 2d L\'evy jump noises. Let us point out the difficulty (very) roughly by the following models.
Consider
\begin{\eqn} \label{e:Toy}
\begin{cases}
\dif X_1(t)=[-\lambda_1 X_1(t)+F_1(X(t))]\dif t+\dif z_1(t), \\
\dif X_2(t)=[-\lambda_2 X_2(t)+F_2(X(t))]\dif t+\dif z_2(t), \\
\dif X_3(t)=[-\lambda_3 X_3(t)+F_3(X(t))]\dif t
\end{cases}
\end{\eqn}
where $\lambda_1, \lambda_2, \lambda_3>0$, $F: \R^3\rightarrow \R^3$ is bounded and Lipschitz, $z_1(t)$ and $z_2(t)$ are
independent 1d L\'evy jump processes. Let $\lambda_3$ be sufficiently large to make
the dissipative term $-\lambda_3 X_3(t)$
dominate the third equation. For the first two equations, when $z_1(t)$ has a jump $\eta_1$ at some moment $\tau$, there are no jumps for $z_2(t)$ at $\tau$ almost surely. We can take the advantage of the jump $\eta_1$ to control the growth of the sample paths of $X_1(t)$ in a short time interval $[\tau, \tau+\delta)$.
Due to the lack of the random effect, the growth of the sample paths of $X_2(t)$
can not be handled in $[\tau, \tau+\delta)$.

From the stimulating discussion \cite{Ner11}, our technique is promising to study
the exponential mixing problem of
SPDEs driven by finite dimensional \emph{symmetric} $\alpha$-stable processes.
These type of processes have a nice representation by $W_{S_t}$ with $W_t$ being a standard
$n$-dimensional Brownian motion and $S_t$ being an $\alpha/2$-stable subordinator. When a jump of $S_t$ comes,
all the $n$ directions of $W_{S_t}$ jumps simultaneously, thus the difficulties in Eq. \eqref{e:Toy} will not appear any more. Symmetric $\alpha$-stable processes have recently studied by both analysis and probability communities (\cite{ChKiSo12,Wa,Zh}).

The structure of the paper is as follows. Section 2 introduces some notations and gives our main theorem. A coupling Markov chain is introduced in Section 3. Section 4 introduces some stopping times related to this Markov chain, which is used to prove the main theorem in Section 5.
\vskip 2mm

{\bf Acknowledgements:} The author gratefully thanks Jerzy Zabczyk for the stimulating discussions and many useful suggestions. He also gratefully thanks
Armen Shirikyan for patiently teaching him the coupling method in the paper \cite{Shi08}. Special thanks are due to Vahagn
Nersesyan for numerous useful suggestions, carefully reading the paper and stimulating discussions about
studying the exponential mixing problem of
SPDEs driven by \emph{symmetric} $\alpha$-stable noises.

\section{Notations and main results}
Denote by $B_b(\R^2)$ the Banach space of bounded Borel-measurable functions $f:\R^2 \rightarrow \R$ with the norm
$$
\|f\|_{0}:=\sup_{x \in \R^2} |f(x)|.
$$
Denote by $L_b(\R^2)$ the Banach space of global Lipschitz bounded functions $f: \R^2 \rightarrow \R$ with the norm
$$
\|f\|_{1}:=\|f\|_0+\|f\|_{{\rm Lip}}.
$$
where
$\|f\|_{{\rm Lip}}:=\sup_{x \neq y}\frac{|f(x)-f(y)|}{|x-y|}.$
\smallskip

\subsection{Some preliminary of L\'evy process (\cite{Ber96})}
Let $(z(t))_{0 \le t<\infty}$ be a one-dimensional purely jumping L\'evy process with
the characteristic function
\Bes
\E {\rm e}^{{\rm i} \xi z(t)}={\rm e}^{t \psi(\xi)}, \ \ \ t \ge 0,
\Ees
where $\psi(\xi)$ is the symbol of $z(t)$. Recall
\Bes
\psi(\xi)=\int_{\R \setminus \{0\}} \left({\rm e}^{{\rm i} \xi y}-1-{\rm i} \xi y 1_{\{|y| \le 1\}}\right) \nu(\dif y),
\Ees
where $\nu$ is the L\'evy measure associated with $z(t)$.

For every $t>0$,
a Poisson random measure $N(t,.)$ is defined by
$$
N(t,\Gamma):=\sum_{s\in(0,t]}1_{\Gamma}\big(\Delta z(s)\big), \ \ \ \ \forall \ t>0, \ \forall \ \Gamma\in \mcl B(\R \setminus\{0\}),
$$
where $\Delta z(s)=z(s)-z(s-)$. For every $K>0$, define
$$z^K(t):=\sum_{0 \le s \le t} \Delta z(s) 1_{\{\Delta z(s) \ge K\}}.$$
Further define
$$\Gamma_K:=(-\infty,-K] \cup [K,\infty), \ \ \gamma_K:=\nu(\Gamma_K),$$
$\gamma_K$ is a decreasing function of $K$ and $\gamma_K<\infty$ for $K>0$.

Let $\tl \tau_1, \tl \tau_2, \dots, \tl \tau_n, \dots$
be a sequence of random times such that
$$\tl \tau_1, \tl \tau_2-\tl \tau_1,
\dots, \tl \tau_n-\tl \tau_{n-1},\cdots$$
are independent exponential random variables
with parameter $\gamma_K$, i.e.
$$\PP(\tl \tau_n-\tl \tau_{n-1}>s)=\re^{-\gamma_K s}, \ \ \ s>0.$$
It is well known that $z^K(t)$ can also be represented by
\Be \label{e:Z>e}
z^{K}(t)=\sum_{k \ge 1} \eta_k 1_{\{\tl \tau_k \le t\}}
\Ee
where $\eta_k$ are independent random variable sequences with distribution
\Be \label{e:NuK}
\nu_K:=\frac{1}{\gamma_K} \nu \big |_{\Gamma_K}.
\Ee
\vskip 2mm

\begin{assumption}
\label{a:}
Assume that the following conditions hold:
\begin{itemize}
\item [(A1)]  For every $\lambda>0$ and
$p \in (0, \alpha)$ with $\alpha \in (0,2)$,
$$\sup_{0 \le t<\infty} \E \left|\int_0^t \re^{-\lambda (t-s)} \dif z(s)\right|^p<\infty.$$
\item [(A2)] For some $K>0$, $\nu_K$ has a density function $p_K$ such that for all $z_1,z_2 \in \R$
\Bes
\int_\R |p_K(z-z_1)-p_K(z-z_2)| \dif z \le  \beta_1|z_1-z_2|^{\beta_2},
\Ees
where $\beta_1, \beta_2>0$ are constants only depending on $K$.
\item [(A3)] There exist some $M>0$ and some $\beta_0=\beta_0(K,M) \in (0,2)$ such that if $|z_1|+|z_2| \le M$,
\Bes
\int_\R |p_K(z-z_1)-p_K(z-z_2)| \dif z \le \beta_0.
\Ees
\item [(A4)] $\gamma_{K} \ge 2 \beta_2 \|F\|_{{\rm Lip}}$.
\end{itemize}
\end{assumption}

\begin{rem}
The number '$2$' in '$\gamma_{K} \ge 2 \beta_2 \|F\|_{Lip}$' of (A4) can be replaced by any number $c>1$.
We choose the special '$2$' to make the computation in sequel more simple. Roughly speaking, (A4) means that
the process $(z(t))_{t \ge 0}$ has sufficiently many jumps bigger than $K$. The number $M$ will be chosen in Theorem
\ref{l:TauPro}.
\end{rem}

\begin{proposition}
An $\alpha$-stable process $(z(t))_{t \ge 0}$ with $0<\alpha<2$
satisfies Assumption \ref{a:}.
\end{proposition}

\begin{proof}
Recall that the L\'evy measure of the $\alpha$-stable process has the form
$$\nu(\dif x)=\frac{c_\alpha}{|x|^{\alpha+1}} 1_{\{|x|>0\}}\dif x,$$
where $c_\alpha$ is some not important constant.
It is easy to see that $\gamma_K \uparrow \infty$ as $K \downarrow 0$, thus (A4) holds. Since $z(t)$ has the characteristic function ${\rm e}^{-|\xi|^\alpha t}$, it is easy to check that $\int_0^t e^{-\lambda(t-s)} dz_s$ has characteristic function
$\exp \big\{-\frac{1-e^{-\alpha \lambda t}}{\alpha\lambda}|\xi|^\alpha\big\}$.
This, together with (3.2) of \cite{PZ09}, immediately gives (A1).

For every $K>0$, we have
$$p_K(z-z_i)=\frac{\alpha  K^\alpha}{2} \frac{1}{|z-z_i|^{\alpha+1}} 1_{\{|z-z_i|>K\}}, \ \ \ \ (i=1,2).$$
Since the supports of the functions $p_K(z-z_1)$ and $p_K(z-z_2)$ have overlaps, it holds that
\Bes
\begin{split}
\int_{\R} |p_K(z-z_1)-p_K(z-z_2)| \dif z<\int_{\R} p_K(z-z_1)\dif z+\int_{\R}p_K(z-z_2) \dif z=2.
\end{split}
\Ees
It is easy to check that for all $M>0$, there exists some $\beta_0 \in (0,2)$ depending 
on $M$ and $K$ such that (A3) holds.

It remains to verify (A2). By the easy fact $1-\left(\frac{1}{1+r}\right)^{\alpha} \le (\alpha+1)r$
for $|r|<\frac{1}{2+2\alpha}$,
when $|z_1-z_2| \le \frac{K}{2\alpha+2}$,
\Bes \label{e:A3-1}
\int_{\R} |p_K(z-z_1)-p_K(z-z_2)| \dif z \le \frac{2\alpha+2}K |z_2-z_1|.
\Ees
As $|z_2-z_1|>\frac{K}{2\alpha+2}$, we have $\frac{4\alpha+4}{K} |z_2-z_1|>2$ and thus
\Bes \label{e:A3-1}
\int_{\R} |p_K(z-z_1)-p_K(z-z_2)| \dif z \le \frac{4\alpha+4}{K} |z_2-z_1|.
\Ees
Take $\beta_1=\frac{4\alpha+4}{K}$ and $\beta_2=1$, we immediately get (A2).
\end{proof}
\vskip 3mm

\subsection{Main result} Let us first show Eq. \eqref{e:MaiSPDE} is well-posed and then
give the main theorem.
\begin{thm} \label{t:SolEU}
For any $x \in \R^2$, problem \eqref{e:MaiSPDE} has a unique strong solution
$(X^x(t))_{t \ge 0}$ with the form:
\begin{\eqn} \label{e:MilSol}
X^x(t)={\rm e}^{At}x+\int_0^t {\rm e}^{A(t-s)} F(X^x(s))\dif s+\int_0^t {\rm e}^{A(t-s)}
\dif Z_s.
\end{\eqn}
Moreover,  $(X^x(t))_{t \ge 0}$ has a c$\grave{a}$dl$\grave{a}$g version in $\R^2$ and is an $\R^2$-valued Markov process starting from $x$.
\end{thm}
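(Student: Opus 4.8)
The plan is to exploit the additive, state-independent structure of the noise to reduce \eqref{e:MaiSPDE} to a pathwise deterministic integral equation. First I would introduce the stochastic convolution (Ornstein--Uhlenbeck term)
\Be
Z_A(t):=\int_0^t e^{A(t-s)}\,dZ_s,
\Ee
whose two components are $\int_0^t e^{-\lambda_1(t-s)}\,dz(s)$ and $0$. Since $A$ is diagonal with strictly negative eigenvalues, $|e^{A(t-s)}|\le 1$, so this integral is well defined; an integration by parts for the smooth deterministic integrand against the semimartingale $z$ (the covariation term vanishes because $s\mapsto e^{\lambda_1 s}$ is continuous and of finite variation) gives
\Be
\int_0^t e^{-\lambda_1(t-s)}\,dz(s)=z(t)-\lambda_1\int_0^t e^{-\lambda_1(t-s)} z(s)\,ds .
\Ee
Here the first term is c\`adl\`ag because $z$ is, and the second is continuous in $t$, being a Riemann integral of the c\`adl\`ag path $z$. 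Hence $Z_A$ admits a c\`adl\`ag version, which will give property (1) once the remaining part of $X^x$ is shown to be continuous.

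Next I would set $Y(t):=X^x(t)-Z_A(t)$ and read off from \eqref{e:MilSol} that $Y$ must solve
\Be
Y(t)=e^{At}x+\int_0^t e^{A(t-s)}F\big(Y(s)+Z_A(s)\big)\,ds ,
\Ee
which, for each fixed realization of the path $Z_A$, is a purely deterministic integral equation containing no stochastic integral. Defining the map $\mathcal T$ to be the right-hand side acting on $C([0,T];\R^2)$, the bound $|e^{A(t-s)}|\le 1$ together with the Lipschitz hypothesis $|F(x)-F(y)|\le\|F\|_{Lip}|x-y|$ yields
\Be
|(\mathcal T Y_1)(t)-(\mathcal T Y_2)(t)|\le \|F\|_{Lip}\int_0^t |Y_1(s)-Y_2(s)|\,ds .
\Ee
Equipping the space with the weighted norm $\|Y\|_\kappa:=\sup_{0\le t\le T}e^{-\kappa t}|Y(t)|$ for any $\kappa>\|F\|_{Lip}$ turns $\mathcal T$ into a contraction with factor $\|F\|_{Lip}/\kappa<1$, so the Banach fixed point theorem provides a unique continuous $Y$ on $[0,T]$ for every $T$, and patching over $[0,\infty)$ gives a unique global solution. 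Since $Y$ is continuous and $Z_A$ is c\`adl\`ag, $X^x=Y+Z_A$ is c\`adl\`ag, and the uniqueness of the mild solution \eqref{e:MilSol} is inherited from that of $Y$.

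Finally, for the Markov property (2) I would invoke the stationarity and independence of the increments of the L\'evy process $z$. Pathwise uniqueness yields the flow property: for $s,t\ge 0$ the solution restarted at time $t$ is driven only by the increments $(z(t+r)-z(t))_{r\ge 0}$, which are independent of $\mathcal F_t$ and distributed as $z$ itself, whence $X^x(t+s)$ is a measurable functional of $X^x(t)$ and of these future increments. Conditioning on $\mathcal F_t$ and using uniqueness then gives $\E[f(X^x(t+s))\mid\mathcal F_t]=(P_s f)(X^x(t))$ for every bounded measurable $f$, which is exactly the Markov property. The step I expect to be the main obstacle is the careful justification of the c\`adl\`ag modification of the stochastic convolution and of the measurable (adapted) dependence of the fixed point on the driving path; both are handled cleanly by the integration-by-parts identity above, which replaces the stochastic convolution by the manifestly c\`adl\`ag process $z$ together with a continuous, hence harmless, correction term.
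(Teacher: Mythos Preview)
Your argument is correct and self-contained. The paper, however, takes a much shorter route: for existence, uniqueness and the Markov property it simply refers to the general well-posedness results of Priola--Zabczyk \cite{PZ09}, and for the c\`adl\`ag property it observes in one line that $Z_t$ is c\`adl\`ag, hence so is the stochastic convolution $\int_0^t e^{A(t-s)}\,dZ_s$, while the other two terms in \eqref{e:MilSol} are continuous.

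What you do differently is to actually \emph{carry out} the proof that \cite{PZ09} packages: you subtract the Ornstein--Uhlenbeck convolution $Z_A$ to reduce to a pathwise random ODE for $Y=X^x-Z_A$, run a weighted-norm contraction to get global existence and uniqueness, and derive the Markov property directly from the independence and stationarity of the L\'evy increments together with pathwise uniqueness. Your integration-by-parts identity for $Z_A$ is exactly the device the paper also alludes to (it appears verbatim in the appended material after the bibliography). The payoff of your approach is that it is self-contained and makes transparent why no moment or nondegeneracy assumption on $z$ is needed for well-posedness; the payoff of the paper's approach is brevity, since the cited reference already covers the semilinear SPDE setting in far greater generality.
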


\begin{proof}
The existence, uniqueness and Markov property of the strong solution have been proved in
\cite{PZ09}. Since $Z_t$ clearly has a c$\grave{a}$dl$\grave a$g version,
$\int_0^t e^{A(t-s)} dZ_s$ also has a c$\grave a$dl$\grave a$g one.
The other two terms on the r.h.s. of \eqref{e:MilSol} are both
continuous, so $(X^x(t))_{t \ge 0}$ is c$\grave a$dl$\grave a$g.
\end{proof}
\vskip 2mm

Denote by $(P_t)_{t\geq 0}$ the Markov semigroup associated with~\eqref{e:MaiSPDE}, i.e.
\begin{\eqn*}
P_t f(x):=\E\left[f(X^x(t))\right], \quad f \in B_b(\R^2),
\end{\eqn*}
and by $(P^{*}_t)_{t \geq 0}$ the dual semigroup acting on $\mcl P(\R^2)$.
Our \emph{main result} is the following ergodic theorem which will be proven in the last section.

\begin{thm}\label{t:MaiThm}
Let $\lambda_1>0$ and Assumption \ref{a:} both hold. There exists some $\lambda_0=\lambda_0(\|F\|_{1}, M, \beta_0, \beta_1, \beta_2)$, where $M, \beta_0, \beta_1, \beta_2$ are as in Assumption \ref{a:}, such that as $\lambda_2>\lambda_0$, the system \eqref{e:MaiSPDE} is exponentially ergodic under the weak topology of
$\mcl P(\R^2)$. More precisely, there exists a probability measure
$\mu \in \mcl P(\R^2)$ so that for any $p \in (0, \alpha)$ and
any measure $\tl \mu \in \mcl P(\R^2)$ with finite $p^{\rm th}$ moment, we have
\begin{equation} \label{2.5}
|\Ll P^{*}_t \tl \mu,f\Rr-\Ll \mu,f\Rr| \leq C e^{-ct}\|f\|_1\bigg(1+\int_{\R^2} |x|^{p}
\tl \mu(\dif x)\bigg), \ \ \ \ \forall \ f \in L_b(\R^2),
\end{equation}
where $C,c$ depend on $p, K, \|F\|_1, \beta_0, \beta_1, \beta_2, \lambda_1, \lambda_2, M$.
\end{thm}

\vskip 1mm

Let us briefly give the strategy of the coupling method we shall use (it is a modification of
the method established in \cite{Shi08}):

(i) Take a \emph{waiting} time $T$ (a fixed number) and define $\tau_0=0$, we look for the first jump after the time $T$ and record its moment by $\tau_1$. Similarly, we do not look for the next jump immediately after $\tau_1$ but do it after the time $\tau_1+T$, and so on. In this way we get a sequence of stopping time $\{\tau_k\}_{k \ge 0}$. The waiting time $T$ will play an important role for estimating the stopping times associated with the coupling Markov chain below.

 (ii) For any $x,y \in \R^2$, take two copies of processes $(X^x(t))_{t \ge 0}$ and $(X^y(t))_{t \ge 0}$,
consider the corresponding embedded Markov chains $(X^x(\tau_k))_{k \ge 0}$ and $(X^y(\tau_k))_{k \ge 0}$.
Using maximal coupling, we construct a coupling Markov chain $(S^{x,y}(k))_{k \ge 0}$ with
$S^{x,y}(k)=(S^x(k), S^y(k))$ for every $k \ge 0$. $(S^x(k))_{k \ge 0}, (S^y(k))_{k \ge 0}$
have the same distributions as those of $(X^x(\tau_k))_{k \ge 0}$ and $(X^y(\tau_k))_{k \ge 0}$ respectively.

 (iii) Define
\Bes
\begin{split}
& \tl \sigma=\inf\{k>0; |S^x(k)|+|S^y(k)| \le M\}, \\
& \hat \sigma=\inf\left\{k>0; |S^x(k)-S^y(k)| \ge \tau_k/\lambda^k_2\right\}.
\end{split}
\Ees
The exact $\hat \sigma$ is defined in \eqref{d:SigXY}, but the above simple version
captures the essential part of \eqref{d:SigXY}.
The main ingredient for showing Theorem \ref{t:MaiThm} is
$$\E[{\rm e}^{c\tl \sigma}]<\infty,   \ \ \ \ \ \PP(\hat \sigma=\infty|S_{\tl \sigma})>0.$$
The first inequality implies that the system $(S(k))_{k \ge 0}$ enters the $M$-radius ball
exponentially frequently. The second inequality means that as long as $(S(k))_{k \ge 0}$ is in that ball, there exists a set of sample paths
with positive probability such that $|S^x(k)-S^y(k)|$ converges to zero exponentially fast
as long as $\lambda_2$ is sufficiently large.
\vskip 3mm

Without loss of generality, from now on we assume
$$\lambda_1 \le \lambda_2.$$
Our method of course covers the case $\lambda_1>\lambda_2$, in which the dissipative term $A X(t)$ dominates the system and the exponential mixing can be shown by a quite easy argument (\cite{PXZ10}).
\vskip 1mm

\subsection{Some easy estimates about the solution}
\begin{lem} \label{l:SolEst}
 For every $x,y \in \R^2$ and $p \in (0, \alpha)$, if $\lambda_2>\|F\|_{{\rm Lip}}$ we have
\Bes
\E |X^x(t)|^p \le (3^{p-1} \vee 1)  \re^{-\lambda_1 pt} |x|^p+C, \ \ \ \ \forall \ t \ge 0,
\Ees
\Bes
\E |X^x(t)-X^y(t)|^p \le (3^{p-1} \vee 1)  \re^{-\lambda_1 pt} |x-y|^p+C, \ \ \ \ \forall \ t \ge 0,
\Ees
\Bes \label{e:X1X2Bou}
|X^x(t)-X^y(t)|\le \re^{t \|F\|_{{\rm Lip}}} |x-y|, \ \ \ \forall \ t \ge 0,
\Ees
\Bes \label{e:X1X2HBou}
|X^x_2(t)-X^y_2(t)| \le \bigg(\re^{-\lambda_2 t}+\frac{\|F\|_{{\rm Lip}}}{\|F\|_{{\rm Lip}}+\lambda_2} \ \re^{t \|F\|_{{\rm Lip}}}\bigg) |x-y|, \ \ \ \forall \ t \ge 0.
\Ees
where $a \vee b:=\max\{a,b\}$ for $a,b \in \R$ and $C$ depends on $p,\lambda,\|F\|_0$.
\end{lem}

\begin{proof}
By \eqref{e:MilSol} we have
\Bes
\begin{split}
\left|X^x(t)\right| & \le e^{-\lambda_1 t}\left|x\right|+\int_0^t e^{-\lambda_1 (t-s)}|F(X^x(s))| \dif s+\left|\int_0^t e^{-\lambda_1(t-s)} \dif z(s)\right|,
\end{split}
\Ees
this, together with (A1) and the assumption of $F$, immediately gives the first inequality. Observe
\Bes
\begin{split}
\left|X^x(t)-X^y(t)\right| \le e^{-\lambda_1 t} |x-y|+2\int_0^t e^{-\lambda_1(t-s)} \dif s \|F\|_0,
\end{split}
\Ees
from which the second inequality follows immediately.
Further observe
\Bes
|X^x(t)-X^y(t)| \le |x-y|+\int_0^t  \|F\|_{{\rm Lip}} |X^x(s)-X^y(s)| \dif s,
\Ees
from this we immediately get the third inequality by Gronwall's inequality.

We also easily have
\Bes
\begin{split}
|X^x_2(t)-X^y_2(t)|& \le e^{-\lambda_2 t} |x-y|+\int_0^t e^{-\lambda_2(t-s)} \|F\|_{{\rm Lip}} |X^x(s))-X^y(s)| \dif s,
\end{split}
\Ees
which, together with the third inequality, yields the fourth one.
\end{proof}

\section{Construction of the coupling Markov chain} \label{s:ConCou}
In this section, we construct a coupling Markov chain which will be used to prove the ergodicity result.
Let
\Be \label{e:TDef}
T>0 \ {\rm be \ a \ fixed \ number}
\Ee
to be determined later in Theorem \ref{l:TauPro}. We call $T$ the \emph{waiting} time, which means
that when a jump comes we do not look for the next jump immediately but do it after waiting
for a time $T$. This waiting time $T$ will play an important role in estimating the stopping times below.

 Define
\Be \label{e:TauDef}
\tau:=\inf\left\{t>T: |\Delta z(t)| \ge K\right\},
\Ee
$\tau$ is a stopping time with probability density
\Be \label{e:TauDen}
\gamma_K \exp\left\{-\gamma_K(t-T)\right\} 1_{\{t>T\}}.
\Ee
 Define $\tau_0:=0$ and
\begin{align*}
\tau_k:=\inf\left\{t>\tau_{k-1}+T: |\Delta z(t)| \ge K \right\} \ \ \
{\rm for \ all \ } k \ge 1.
\end{align*}
It is easy to see that $\{\tau_k\}_{k \ge 0}$ are a sequence of stopping times such that
\Be \label{e:TauKInd}
\{\tau_{k}-\tau_{k-1}\}_{k \ge 1} {\rm \ are \ independent \ and \ have \ the \ same \ density \ as}
\ \tau.
\Ee
\vskip 2mm


Since the solution of problem \eqref{e:MaiSPDE} with the initial data
$X(0)=x$ has a c$\grave{a}$dl$\grave{a}$g version,
$X^x(\tau_1-)$ is well defined with the form:
\Be
 X^x(\tau_1-)=e^{A \tau_1} x+\int_0^{\tau_1} e^{A(\tau_1-s)} F(X^x(s)) \dif s+\int_0^{\tau_1-} e^{A(\tau_1-s)} \dif Z_s,
\Ee
By \eqref{e:Z>e} and strong Markov property of $z(t)$, at the time $\tau_1$,
there is only one jump $\eta$ almost
surely and $\eta$ has the probability density $\nu_K$ (see \eqref{e:NuK}). Therefore,
\Bes
X^x(\tau_1)=X^x(\tau_1-)+\eta [1,0]^{T} \ \ \ a.s..
\Ees


\noindent Denote by $P^{(1)}_{x}(.):\mcl B(\R^2) \rightarrow [0,1]$
the distribution of
$X^x(\tau_1-)$ for every $x \in \R^2$, and by $P^{(2)}_{\hat x}(.): \mcl B(\R^2) \rightarrow [0,1]$
the distribution of $\hat x+\eta [1,0]^T$ for every $\hat x\in \R^2$.
For any $A\in \mcl B(\R^2)$, define
\Be
P_x(A):=\int_{\R^2} P^{(2)}_{\hat x}(A) P^{(1)}_{x}(\dif \hat x),
\Ee
$(X^x(\tau_k))_{k \ge 0}$ is an $\R^2$-valued Markov chain with
transition probability $\left(P_{x}(.)\right)_{x \in \R^2}$.
\vskip 2mm


For any random variable $X$, 
$${\rm denote \ by \ } \mcl L(X) \ {\rm the \ law \ of \ } X. $$
Let $\left(\xi_x(\hat x_1, \hat y_1), \xi_y(\hat x_1, \hat y_1)\right)$ be the maximal coupling of
$\mcl L(\hat x_1+\eta)$ and $\mcl L(\hat y_1+\eta)$, we have
\begin{lem} \label{l:TV1}
For every $\hat x_1, \hat y_1 \in \R$, we have
\Bes
\PP \left(\xi_x(\hat x_1, \hat y_1) \neq  \xi_y(\hat x_1, \hat y_1)\right) \le  \beta_1 |\hat x_1-\hat y_1|^{\beta_2}/2
\Ees
where $\beta_1, \beta_2$ are the constants in Assumption \ref{a:}. Furthermore, if $|\hat x_1|+|\hat y_1| \le M$, then
\Bes
\PP \left(\xi_x(\hat x_1, \hat y_1) \neq  \xi_y(\hat x_1, \hat y_1)\right) \le {\beta_0}/2.
\Ees
where $\beta_0$ is the constant in Assumption \ref{a:}.
\end{lem}

\begin{proof}
Since $(\xi_x(\hat x_1, \hat y_1),\xi_y(\hat x_1, \hat y_1))$ is the maximal coupling of
$\mcl L(\hat x_1+\eta)$ and $\mcl L(\hat y_1+\eta)$,
\Bes
\PP \left(\xi_x(\hat x_1, \hat y_1) \neq  \xi_y(\hat x_1, \hat y_1)\right)=\|\mcl L(\hat x_1+\eta)-\mcl L(\hat y_1+\eta)\|_{TV}.
\Ees
Note that the distributions  $\mcl L(\hat x_1+\eta)$ and $\mcl L(\hat y_1+\eta)$
have the densities $p_K(z-\hat x_1)$ and $p_K(z-\hat y_1)$ respectively, where $p_K$
is defined in Assumption \ref{a:}.
It is easy to see that
\Bes
\|\mcl L(\hat x_1+\eta)-\mcl L(\hat y_1+\eta)\|_{TV} \le \frac 12 \int_\R |p_K (z-\hat x_1)-p_K (z-\hat y_1)| \dif z,
\Ees
this, together with (A2) and (A3) of Assumption \ref{a:}, immediately implies the desired inequalities.
\end{proof}

For every $\hat x, \hat y \in \R^2$, define
\Be \label{e:BarXi}
\bar \xi_x(\hat x, \hat y):=\left[\begin{array}{lll} &\xi_x(\hat x_1, \hat y_1) \\
& \ \ \ \hat x_2
\end{array}
\right], \ \ \ \bar \xi_y(\hat x, \hat y):=\left[\begin{array}{lll} & \xi_y(\hat x_1, \hat y_1) \\
& \ \ \ \hat y_2
\end{array}
\right].
\Ee
Since $\mcl L(\xi_x(\hat x_1, \hat y_1))=\mcl L(\hat x_1+\eta)$ and $\mcl L(\xi_y(\hat x_1, \hat y_1))=\mcl L(\hat y_1+\eta)$,
we have
\Be  \label{e:LawBarXEta}
\begin{split}
\mcl L(\bar \xi_x(\hat x, \hat y))=\mcl L(\hat x+\eta [1,0]^{T}),  \ \ \  \mcl L(\bar \xi_y(\hat x, \hat y))=\mcl L(\hat y+\eta [1,0]^{T}).
\end{split}
\Ee

\noindent Denote the probability of $(X^x(\tau_1-),X^y(\tau_1-))$ with $(x,y) \in \R^2 \times \R^2$ by
\Be \label{e:P(1)}
P^{(1)}_{(x,y)}(.):\mcl B(\R^2 \times \R^2) \rightarrow [0,1],
\Ee
further denote the probability of
$(\bar \xi_x(\hat x, \hat y), \bar \xi_y(\hat x, \hat y))$ with $(\hat x,\hat y) \in \R^2 \times \R^2$ by
\Be \label{e:P(2)}
P^{(2)}_{(\hat x, \hat y)}(.):\mcl B(\R^2 \times \R^2) \rightarrow [0,1].
\Ee
For any $A \in \mcl B(\R^2 \times \R^2)$,
define
\Be \label{e:TraChnPro}
P_{(x,y)}(A):=\int_{\R^2 \times \R^2} P^{(2)}_{(\hat x,\hat y)} (A) P^{(1)}_{(x,y)}(\dif \hat x, \dif \hat y).
\Ee

\begin{prop} \label{p:CouChn}
There exists a probability space $(\tl \Omega, \tl {\mcl F}, \tl {\PP})$ and an $\R^2 \times \R^2$-valued Markov chain $\{S(k)\}_{k \ge 0}$ on $(\tl \Omega, \tl {\mcl F}, \tl {\PP})$ with transition probability
family $(P_{(x,y)})_{(x, y) \in \R^2 \times \R^2}$.
Moreover, for every $(x,y) \in \R^2 \times \R^2$,
the marginal chain $\{S^x(k)\}_{k \ge 0}$ has
the same distribution as $\{X^x(\tau_k)\}_{k \ge 0}$ and the marginal chain $\{S^y(k)\}_{k \ge 0}$  has
the same distribution as $\{X^y(\tau_k)\}_{k \ge 0}$.
\end{prop}

\begin{proof}
The construction of the coupling Markov chain is classical since the transition probability
family $(P_{(x,y)})_{(x, y) \in \R^2 \times \R^2}$ is ready.
To prove the other claim in the proposition, it suffices to
show that for all $x \in \R^2$, $y \in \R^2$, $A \in \mcl B(\R^2)$,
we have
\Be \label{e:CouSinEqn}
P_{(x,y)}(A \times \R^2)=P_{x}(A), \ \ P_{(x,y)}(\R^2 \times A)=P_{y}(A)
\Ee
where $(P_{x})_{x \in \R^2}$ is the
transition probability family of $(X(\tau_k))_{k \ge 0}$.

We only show the first equality of \eqref{e:CouSinEqn} since the other one can be proven similarly.
Recall that $P^{(1)}_{x}(.)$ is
the distribution of
$X^x(\tau_1-)$ and that $P^{(2)}_{\hat x}(.)$ is
the distribution of $\hat x+\eta [1,0]^T$.
It is clear that
$$P^{(1)}_{(x,y)}(. \times \R^2)=P^{(1)}_{x}(.), \ \ \ P^{(2)}_{(\hat x, \hat y)}(. \times \R^2)=P^{(2)}_{\hat x}(.),$$
where $P^{(1)}_{(x,y)}$ and $P^{(2)}_{(\hat x, \hat y)}$ are defined by \eqref{e:P(1)} and
\eqref{e:P(2)} respectively.
It follows from the definitions of $P_{(x,y)}(.)$ and $P_{x}(.)$ that
\Bes 
\begin{split}
P_{(x,y)}(A \times \R^2)&=\int_{\R^2 \times \R^2} P^{(2)}_{(\hat x,\hat y)}(A \times \R^2) P^{(1)}_{(x,y)} (\dif \hat x, \dif \hat y) \\
&=\int_{\R^2 \times \R^2} P^{(2)}_{\hat x}(A) P^{(1)}_{(x,y)} (\dif \hat x, \dif \hat y)=\int_{\R^2} P^{(2)}_{\hat x}(A) P^{(1)}_{x} (\dif \hat x)=P_{x}(A).
\end{split}
\Ees
\end{proof}

\section{Some estimates of the coupling chain $(S^{x,y}(k))_{k \ge 0}$}
$(S(k))_{k \ge 0}$ constructed in previous section is a Markov chain on the probability space
$(\tl \Omega, \tl {\mcl F}, \tl {\PP})$.
$(\tl \Omega, \tl {\mcl F}, \tl {\PP})$ is not necessarily
the same as $(\Omega, {\mcl F}, {\PP})$ on which $(X(t))_{t \ge 0}$ is located.
Without loss of generality, we assume that
\Be \label{a:SpaAss}
(\Omega, {\mcl F}, {\PP})=(\tl \Omega, \tl {\mcl F}, \tl {\PP}).
\Ee
Otherwise we can introduce the product space $(\tl \Omega \times \Omega, \tl {\mcl F} \times \mcl F, \tl {\PP} \times \PP)$
and consider $(S(k))_{k \ge 0}$ and $(X(t))_{t \ge 0}$ both on this new space. However,
this will make the notations unnecessarily complicated.
So, we always assume \eqref{a:SpaAss} and consider $(S(k))_{k \ge 0}$ and $(X(t))_{t \ge 0}$
on $(\Omega, \mcl F, \PP)$ from now on. 

For any $(x,y) \in \R^2 \times \R^2$, we denote 
$\{S^{x,y}(k)\}_{k \ge 0}$ the coupling Markov chain with initial state $(x,y)$. 
Recall that $\{S^{x}(k)\}_{k \ge 0}$ and $\{S^{y}(k)\}_{k \ge 0}$ denote the two marginal Markov chains. 
\vskip 3mm

Let $M>0$ and $d>0$ both be some number to be determined later, define the stopping times
\Be \label{d:SigM}
\tl \sigma(x,y,M):=\inf \left\{k>0; |S^x(k)|+|S^y(k)| \le M\right\},
\Ee
\Be \label{d:SigD}
\sigma(x,y,d):=\inf \left\{k>0; |S^x(k)-S^y(k)| \le d\right\},
\Ee
we write $\tl \sigma=\tl \sigma(x,y,M)$, $\sigma=\sigma(x,y,d)$ in shorthand if no confusions arise, and shall prove the 
following two theorems. 

\begin{thm} \label{l:TauPro}
For all $p \in (0, \alpha)$, as $T>T_0:=\frac{(p-1)\log 3}{p \lambda_1} \vee 0$, there
exist some $M, \tl \vartheta, C>0$ all depending on $p,\lambda, \|F\|_0, T$, so that for all $x, y \in \R^2$,
\Bes
\E_{(x,y)} [{\rm e}^{\tl \vartheta \tl \sigma(x,y,M)}]<C(1+|x|^p+|y|^p).
\Ees
\end{thm}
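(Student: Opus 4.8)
The plan is to establish a geometric drift inequality for the Lyapunov function $V(u,v):=|u|^p+|v|^p$ along the coupling chain $(S^{x,y}(k))_{k\ge 0}$, and then to convert that drift into an exponential tail bound for the hitting time $\tl\sigma$ by a stopped-supermartingale argument. The condition $T>T_0$ will enter only through the requirement that the contraction factor in the drift be strictly less than $1$.

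First I would derive the one-step drift. By Proposition \ref{p:CouChn} the marginal $(S^x(k))_{k\ge 0}$ is Markov with the same law as $(X^x(\tau_k))_{k\ge 0}$, so conditionally on $S^{x,y}(k)$ the state $S^x(k+1)$ is distributed as $X^{S^x(k)}(\tau_1)$, where the inter-jump time $\tau_1$ has density $\gamma_K e^{-\gamma_K(t-T)}1_{\{t>T\}}$. Since $\tau_1=T+\mathrm{Exp}(\gamma_K)$,
\[
\E[e^{-\lambda_1 p\tau_1}]=e^{-\lambda_1 pT}\frac{\gamma_K}{\gamma_K+\lambda_1 p}.
\]
Integrating the first estimate of Lemma \ref{l:SolEst}(1) against the law of $\tau_1$ then gives, with the same constant for $S^y$,
\[
\E[|S^x(k+1)|^p\mid S^{x,y}(k)]\le \gamma\,|S^x(k)|^p+C,\qquad \gamma:=(3^{p-1}\vee 1)\,e^{-\lambda_1 pT}\frac{\gamma_K}{\gamma_K+\lambda_1 p},
\]
and adding the two coordinates yields $\E[V(S^{x,y}(k+1))\mid\mcl{F}_k]\le \gamma V(S^{x,y}(k))+b$ with $b=2C$. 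The hypothesis $T>T_0=\frac{(p-1)\log 3}{p\lambda_1}\vee 0$ is exactly what forces $(3^{p-1}\vee 1)e^{-\lambda_1 pT}<1$, hence $\gamma<1$; for $p<1$ the factor $3^{p-1}\vee 1$ equals $1$ and any $T>0=T_0$ suffices.

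Next I would fix $M$ so large that $R:=(M/2)^p$ satisfies $q:=\gamma+b/R<1$. On the event $\{\tl\sigma>k\}$ one has $|S^x(k)|+|S^y(k)|>M$, hence $V(S^{x,y}(k))>R$, for every $1\le k<\tl\sigma$, so at all such steps the drift upgrades to $\E[V(k{+}1)\mid\mcl{F}_k]\le q\,V(k)$. Setting $\vartheta:=-\log q>0$ and $W(k):=e^{\vartheta k}V(S^{x,y}(k))$, the stopped process $(W(k\wedge\tl\sigma))_{k\ge 1}$ is a supermartingale, whence $\E[W(k\wedge\tl\sigma)]\le\E[W(1)]\le e^{\vartheta}(\gamma V(0)+b)$. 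Using $W(k\wedge\tl\sigma)\ge e^{\vartheta k}R$ on $\{\tl\sigma>k\}$ gives $\PP(\tl\sigma>k)\le C_0\,(1+|x|^p+|y|^p)\,e^{-\vartheta k}$, and summing $\E[e^{\tl\vartheta\tl\sigma}]=\sum_{k\ge 1}e^{\tl\vartheta k}\PP(\tl\sigma=k)$ for any $\tl\vartheta\in(0,\vartheta)$ produces the stated bound $\E_{(x,y)}[e^{\tl\vartheta\tl\sigma}]<C(1+|x|^p+|y|^p)$, with $M,\tl\vartheta,C$ depending only on $p,\lambda,\|F\|_0,T,\nu$ through $\gamma,b,q$.

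The step I expect to be the main obstacle is the drift inequality, not the martingale bookkeeping. Although $S^x(k+1)\stackrel{d}{=}X^{S^x(k)}(\tau_1)$ is clean, the stopping time $\tau_1$ is \emph{not} independent of the driving noise (it is measurable with respect to the large jumps after time $T$), so integrating the deterministic-time estimate of Lemma \ref{l:SolEst}(1) against the law of $\tau_1$ must be justified by conditioning on $\{\tau_1=t\}$ and checking that the conditional $p$-th moment of the stochastic convolution $\int_0^{t}e^{-\lambda_1(t-s)}\,dz(s)$ remains bounded uniformly in $t$; this holds because conditioning only removes large jumps on $(T,t)$, so (A1) still controls it. One must also note $\E|\eta|^p<\infty$ for the jump added at $\tau_1$, which is valid precisely because $p<\alpha$.
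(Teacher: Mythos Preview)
Your proposal is correct and follows the same route as the paper: derive the one-step drift $\E[|S^x(1)|^p+|S^y(1)|^p]\le \gamma(|x|^p+|y|^p)+b$ with $\gamma<1$ exactly when $T>T_0$, and then turn it into the geometric tail bound $\PP_{(x,y)}(\tl\sigma>k)\le q^k(1+|x|^p+|y|^p)$ (the paper outsources this second step to Lemma~6.5 of \cite{PSXZ11}, whereas you spell out the stopped-supermartingale argument explicitly). The conditioning subtlety you flag---that $\tau_1$ is not independent of $z$, so integrating Lemma~\ref{l:SolEst}(1) against the law of $\tau_1$ needs justification---is glossed over in the paper, and your proposed fix via the decomposition $z=z_K+z^K$ and the independence of $z_K$ from $\tau_1$ is the right one.
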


\begin{thm} \label{t:SigDEst}
There exists some constants $\vartheta, C>0$ depending on $p,  \lambda, \|F\|_{1}, d, K$, such that for all $p \in (0, \alpha)$ and $x, y \in \R^2$,
\Be \label{e:SigEst}
\E_{(x,y)} [{\rm e}^{\vartheta \sigma(x,y,d)}]\leq C\bg(1+|x|^p+|y|^p\bg).
\Ee
\end{thm}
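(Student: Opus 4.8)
The plan is to control $\sigma(x,y,d)$ by combining two ingredients that are already in place: the recurrence of the coupled chain to the ball $\{|S^x(k)|+|S^y(k)|\le M\}$ with exponential moments (Theorem \ref{l:TauPro}), and a uniform lower bound on the probability that, once inside this ball, the distance $|S^x(k)-S^y(k)|$ drops below $d$ within a fixed number of steps (a consequence of Proposition \ref{p:SquKOrd}). Writing $D_k:=|S^x(k)-S^y(k)|$, I would realize $\sigma$ through a regeneration scheme: each visit to the ball is a fresh ``trial,'' at every trial the event that $\{D_k\le d\}$ occurs within $N$ further steps has probability at least some fixed $q_0>0$, and if a trial fails we wait (an exponential-moment amount of time) for the next visit and try again. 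Since the number of trials is dominated by a geometric random variable and each inter-trial time has a uniform exponential moment, $\sigma$ inherits an exponential moment, with the $(1+|x|^p+|y|^p)$ factor coming from the first hitting time $\tl\sigma(x,y,M)$.

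The one-shot estimate is the heart of the argument. Starting from any $(x_0,y_0)$ with $|x_0|+|y_0|\le M$ we have $D_0\le M$. Proposition \ref{p:SquKOrd} gives, conditionally on the past, probability at least $1-\beta_0/2>0$ that $D_{k+1}<\delta_k D_k$. Because the interarrival times satisfy $\tau_{k+1}-\tau_k\ge T$ and have a density that is positive on an interval to the right of $T$, I would intersect the good event of Proposition \ref{p:SquKOrd} with $\{\tau_{k+1}-\tau_k\in[T,T+1]\}$; on this intersection
\Bes
\delta_k \le e^{-\lambda_2 T}+\frac{\|F\|_{Lip}\,e^{(T+1)\|F\|_{Lip}}}{\lambda_2+\|F\|_{Lip}}=:\rho,
\Ees
and the largeness of $\lambda_2$ forced by \eqref{e:Lam2Con} makes $\rho<1$. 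Each step therefore contracts $D_k$ by the factor $\rho$ with probability at least a fixed $q_1>0$. Choosing $N$ so large that $\rho^N M\le d$ and applying the Markov property step by step, the probability that all $N$ consecutive steps are good is at least $q_0:=q_1^{\,N}>0$; on that event $D_N\le \rho^N M\le d$, hence $\sigma\le N$. This yields $\inf_{|x_0|+|y_0|\le M}\PP_{(x_0,y_0)}(\sigma\le N)\ge q_0$.

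To assemble the regeneration scheme I would set $\varrho_0:=\tl\sigma(x,y,M)$ and, recursively, run the chain for $N$ steps after each visit; if $\{D_k\le d\}$ has not occurred, let $\varrho_{i+1}$ be the first return to the ball after $\varrho_i+N$. By the strong Markov property and Theorem \ref{l:TauPro}, each increment $\varrho_{i+1}-\varrho_i$ has an exponential moment that is uniform in $i$: starting inside the ball, the $p$-th moments of $S^x,S^y$ after the $N$ ``attempt'' steps are bounded by Lemma \ref{l:SolEst}, and conditioning on the failure event (probability $\le 1-q_0$) inflates these moments only by the harmless factor $(1-q_0)^{-1}$. The number of trials before the first success is stochastically dominated by a geometric law with parameter $q_0$, which has an exponential moment; combining this with the uniform exponential moments of the increments through the standard renewal estimate gives $\E_{(x,y)}[e^{\vartheta\sigma}]\le C(1+|x|^p+|y|^p)$ for $\vartheta>0$ small enough.

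The main obstacle is the bookkeeping in this last step: one must define the trial and return times so that the strong Markov property applies cleanly, verify that the post-failure moment bounds are genuinely uniform in the trial index (so that the per-trial exponential moments do not deteriorate), and choose $\vartheta$ small enough that the product of the geometric generating function and the per-increment moment generating functions converges. A secondary point that must be checked carefully is the independence structure used in the one-shot step, since the ``good'' event of Proposition \ref{p:SquKOrd} refers to the coupling realized at time $\tau_{k+1}$ while the contraction factor $\delta_k$ depends on $\tau_{k+1}-\tau_k$; conditioning on the interarrival time first and then on the coupling outcome resolves this, but it is where care is needed.
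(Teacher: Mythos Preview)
Your regeneration scheme is sound and leads to the result, but the paper takes a different and somewhat shorter route. The key divergence is in the ``one-shot'' estimate. You arrange $N$ consecutive contractive steps, each intersected with $\{\tau_{k+1}-\tau_k\in[T,T+1]\}$, to force $D_N\le \rho^N M\le d$. The paper instead proves a one-step result (Lemma~\ref{l:Sx-Sy<12}): for $|x|+|y|\le M$,
\[
\PP\big(|S^x(1)-S^y(1)|>d\big)\le \frac{\beta_0}{2}+\frac{M\,\E[\delta_0]}{d}<\frac12+\frac{\beta_0}{4},
\]
where the last inequality is \emph{exactly} what the explicit condition~\eqref{e:Lam2Con} is engineered to give. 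So the paper hits $\{D\le d\}$ in a single transition with probability at least $\tfrac12-\tfrac{\beta_0}{4}$, avoiding your $N$-step iteration and the need to verify $\rho<1$ (which, incidentally, is not a direct consequence of~\eqref{e:Lam2Con} as you assert; it holds because $\lambda_2$ is large, but you would need the relation between $M$ and $d$ to extract it cleanly from that inequality alone).

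The global organization also differs. Rather than a renewal scheme with a geometric number of trials, the paper fixes $k$, lets $\tl\rho_k=\sup\{j:\tl\sigma_j\le k\}$ count the returns to the $M$-ball by step $k$, and splits $\PP_{(x,y)}(\sigma=k)=\sum_{j\le l}+\sum_{j>l}$. The first sum is small because few returns by time $k$ is exponentially unlikely (Theorem~\ref{l:TauPro} iterated); the second is small because each of the $j>l$ returns gave an independent chance $\ge \tfrac12-\tfrac{\beta_0}{4}$ to land in $\{D\le d\}$, yielding a geometric factor $(\tfrac12+\tfrac{\beta_0}{4})^{j-1}$. Choosing $l=\varepsilon k$ balances the two. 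Your renewal argument packages the same two ingredients (exponential return moments, uniform success probability) differently and is arguably more modular, but the paper's decomposition avoids the bookkeeping you flag concerning uniform post-failure moment bounds and the choice of $\vartheta$.
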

\vskip 3mm

\begin{proof} [Proof of Theorem \ref{l:TauPro}]
It suffices to show that for every $p \in (0, \alpha)$, as $T>T_0:=\frac{(p-1)\log 3}{p \lambda_1} \vee 0$, there
exist some $M>0$ depending on $p,\lambda, \|F\|_0, T, \nu$
and some $q \in (0,1)$ depending on $p,\lambda,M$ such that
\begin{\eqn} \label{e:TauPro}
\PP_{(x,y)}(\tl \sigma>k) \leq q^k\left(1+|x|^p+|y|^p\right) \ \ \ \ \ \ k \ge 1,
\end{\eqn}
for all $x,y \in \R^2$. The proof of \eqref{e:TauPro} is by the same argument as that in Lemma 6.5 of \cite{PSXZ11}. To apply that argument,
we only need to show
\Be \label{e:SxSy<q}
\E\left(|S^x(1)|^p+|S^y(1)|^p\right) \le q^2(|x|^p+|y|^p)+C.
\Ee
where $C$ depends on $\lambda, p, \|F\|_0$.
\vskip 2mm

By Proposition \ref{p:CouChn}, for all $p \in (0,\alpha)$ we have
\Bes
\begin{split}
& \ \E \left(\left|S^{x}(1)\right|^p+\left|S^{y}(1)\right|^p \right)=\E \left|X^{x}(\tau_1)\right|^p+\E \left|X^{y}(\tau_1)\right|^p,
\end{split}
\Ees
which, together with Lemma \ref{l:SolEst}, implies
\Be
\E \left(\left|S^{x}(1)\right|^p+\left|S^{y}(1)\right|^p \right) \le
(3^{p-1} \vee 1) \E \left[\re^{-p \lambda_1 \tau_1}\right] (|x|^p+|y|^p)+C,
\Ee
where $C$ depends on $\lambda, \|F\|_{0}, p$. Therefore, to show \eqref{e:SxSy<q}, we only need to show that
\Be \label{e:C1EeSma}
(3^{p-1} \wedge 1) \E[\re^{-p \lambda_1 \tau_1}]<1.
\Ee
When $p \le 1$, \eqref{e:C1EeSma} automatically holds for all $T \ge 0$. When $p>1$,
\Bes
3^{p-1} \E[\re^{-p\lambda_1  \tau_1}]=\frac{3^{p-1} \gamma_K e^{-p \lambda_1  T}}{\gamma_K+p \lambda_1 }<1
\Ees
as $T>T_0$.
\end{proof}
\vskip 2mm

We are now at the position to show Theorem \ref{t:SigDEst}, to this end, we first show  
\begin{prop}   \label{p:SquKOrd}
For all $x, y \in \R^2$, we have
\Bes 
\begin{split}
\PP\left\{|S^x(k+1)-S^y(k+1)|>\delta_k |S^x(k)-S^y(k)| \big |S^{x,y}(k)\right\} \le \kappa |S^x(k)-S^y(k)|^{\beta_2},
\end{split}
\Ees
for all $k \ge 0$,
where
\Be \label{e:Kappa}
\delta_k={\rm e}^{-\lambda_2 (\tau_{k+1}-\tau_k)}+\frac{\|F\|_{{\rm Lip}} {\rm e}^{(\tau_{k+1}-\tau_k) \|F\|_{{\rm Lip}}}}{\lambda_2+\|F\|_{{\rm Lip}}},
\ \ \ \kappa=\beta_1 {\rm e}^{\beta_2 \|F\|_{{\rm Lip}} T},
\Ee
with $\beta_1, \beta_2$ being the constants in Assumption \ref{a:} and $T$ being defined in \eqref{e:TDef}. Furthermore, if $|S^x(k)|+|S^y(k)| \le M$,
\Bes
\PP\left\{|S^x(k+1)-S^y(k+1)|>\delta_k |S^x(k)-S^y(k)| \big |S^{x,y}(k)\right\} \le {\beta_0}/{2},
\Ees
with $\beta_0$ being the constant in Assumption \ref{a:}
\end{prop}

\begin{proof}
The proofs of the both inequalities are similar, we only show the first one.
Since $\{S^{x,y}(k)\}_{k \ge 0}$ is a time-homogeneous Markov chain,
it suffices to show the inequality for $k=0$, i.e.
\Be \label{e:Squ1Ord}
\PP\left(|S^x(1)-S^y(1)| \ge \delta_0 |x-y|\right) \le \kappa |x-y|^{\beta_2}.
\Ee
By the construction of the Markov chain $\{S^{x,y}(k)\}_{k \ge 0}$, $S^{x,y}(1)$ has the same distribution
as
\Be \label{e:XiCouX}
\big(\bar \xi_x(X^x(\tau_1-), X^y(\tau_1-)), \ \bar \xi_y(X^x(\tau_1-), X^y(\tau_1-))\big),
\Ee
we shall write
\eqref{e:XiCouX} by $(\bar \xi_x, \bar \xi_y)$ in shorthand.
By \eqref{e:BarXi}, we have
$
\bar \xi_x=\left[\begin{array}{lll} &\ \ \xi_x \\
&X^x_2(\tau_1-)
\end{array}
\right], \ \ \ \bar \xi_y=\left[\begin{array}{lll} & \ \ \xi_y \\
&X^y_2(\tau_1-)
\end{array}
\right],
$
thus
\Be \label{e:BarxNeqBary}
\begin{split}
\PP \left(|\bar \xi_x-\bar \xi_y|>\delta_0 |x-y|\right) & \le \PP \left(|\xi_x-\xi_y|+|X^x_2(\tau_1-)-X^y_2(\tau_1-)|>\delta_0 |x-y|\right) \\
& \le \PP \left(\xi_x \neq \xi_y\right)+\PP \left(\xi_x=\xi_y, \ |X^x_2(\tau_1-)-X^y_2(\tau_1-)|>\delta_0 |x-y|\right).
\end{split}
\Ee

\noindent It follows from Lemma \ref{l:TV1} that
\Bes
\begin{split}
 \PP \left(\xi_x \neq \xi_y\right)=\E \left[\PP\big(\xi_x \neq \xi_y \big |(X^x_1(\tau-),X^y_1(\tau_1-))\big)\right] \le  \beta_1\E \left|X^x_1(\tau_1-)-X^y_1(\tau_1-)\right|^{\beta_2},
\end{split}
\Ees
this, together with Lemma \ref{l:SolEst} and (A4) of Assumption \ref{a:}, implies
\Be
\begin{split}
\PP \left(\xi_x \neq \xi_y\right)  &\le \beta_1 \E\left[\re^{\beta_2 \|F\|_{{\rm Lip}} \tau_1}\right] |x-y|^{\beta_2}/2\le  \kappa |x-y|^{\beta_2}.
\end{split}
\Ee
From Lemma \ref{l:SolEst} we have
$$|X^x_2(\tau_1-)-X^y_2(\tau_1-)| \le \delta_0 |x-y| \ \ \ a.s., $$
thus,
\Be \label{e:Xx=Xy}
\PP \left(\ |X^x_2(\tau_1-)-X^y_2(\tau_1-)|>\delta_0 |x-y|\right)=0.
\Ee
Collecting \eqref{e:BarxNeqBary}-\eqref{e:Xx=Xy}, we immediately get the desired inequality.
\end{proof}
To prove Theorem \ref{t:SigDEst}, we also need 
\begin{lem} \label{l:Sx-Sy<12}
Let $x,y \in \R^2$ be such that $|x|+|y| \le M$.
As $\lambda_2$ is sufficiently large, depending on $\|F\|_1,K,T, \beta, M, d$, 
we have
\Bes 
\PP\left\{|S^x(1)-S^y(1)|>d \right\}<(2+\beta_0)/ 4
\Ees
where $\beta_0 \in (0,2)$ is the constant defined in Assumption \ref{a:}.
\end{lem}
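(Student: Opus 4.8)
The plan is to split the event $\{|S^x(1)-S^y(1)|>d\}$ according to whether the maximal coupling of the driving jump succeeds, and then to exploit the largeness of $\lambda_2$ (hypothesis \eqref{e:Lam2Con}) to make the complementary ``coupling succeeds but the two paths are still far apart'' event rare. First I would recall, as in the proof of Proposition \ref{p:SquKOrd}, that by the construction of the chain (see \eqref{e:BarXi} and Proposition \ref{p:CouChn}) the pair $S^{x,y}(1)$ has the same law as $(\bar\xi_x,\bar\xi_y)$ with $\bar\xi_x=(\xi_x,X^x_2(\tau_1-))^T$ and $\bar\xi_y=(\xi_y,X^y_2(\tau_1-))^T$, so it suffices to bound $\PP\{|\bar\xi_x-\bar\xi_y|>d\}$. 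On $\{\xi_x=\xi_y\}$ the two points agree in the first coordinate, whence $|\bar\xi_x-\bar\xi_y|=|X^x_2(\tau_1-)-X^y_2(\tau_1-)|$, and the second estimate of Lemma \ref{l:SolEst}(2) (evaluated at $t=\tau_1$) bounds this by $\delta_0|x-y|\le \delta_0 M$, where $\delta_0$ is the quantity in \eqref{e:Kappa} with $\tau_0=0$ and I have used $|x-y|\le|x|+|y|\le M$. This gives the inclusion
\[
\{|\bar\xi_x-\bar\xi_y|>d\}\subseteq\{\xi_x\neq\xi_y\}\cup\{\delta_0 M>d\},
\]
so that $\PP\{|S^x(1)-S^y(1)|>d\}\le \PP\{\xi_x\neq\xi_y\}+\PP\{\delta_0 M>d\}$.

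Next I would estimate the two terms separately. Conditioning on $(X^x_1(\tau_1-),X^y_1(\tau_1-))$ and invoking Lemma \ref{l:TV1} gives $\PP\{\xi_x\neq\xi_y\}\le \beta_0/2$ (this is exactly the first bound obtained inside the proof of Proposition \ref{p:SquKOrd}). For the second term I would use Markov's inequality, $\PP\{\delta_0 M>d\}\le (M/d)\,\E[\delta_0]$, and compute $\E[\delta_0]$ from the explicit density $\gamma_K e^{-\gamma_K(t-T)}\mathbf 1_{\{t>T\}}$ of $\tau_1$. A direct integration yields $\E[e^{-\lambda_2\tau_1}]=\gamma_K e^{-\lambda_2 T}/(\gamma_K+\lambda_2)$ and $\E[e^{\|F\|_{Lip}\tau_1}]=\gamma_K e^{\|F\|_{Lip}T}/(\gamma_K-\|F\|_{Lip})$; Assumption (A3), which forces $\gamma_K\ge 2\|F\|_{Lip}$, makes the latter at most $2e^{\|F\|_{Lip}T}$, while for $\lambda_2$ large the former is $\le \|F\|_{Lip}/(\lambda_2+\|F\|_{Lip})$. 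Combining these,
\[
\E[\delta_0]\le \frac{\|F\|_{Lip}}{\lambda_2+\|F\|_{Lip}}+\frac{\|F\|_{Lip}}{\lambda_2+\|F\|_{Lip}}\,2e^{\|F\|_{Lip}T}=\frac{\|F\|_{Lip}\bigl(1+2e^{\|F\|_{Lip}T}\bigr)}{\lambda_2+\|F\|_{Lip}},
\]
and hypothesis \eqref{e:Lam2Con} turns this into $\PP\{\delta_0 M>d\}\le (2-\beta_0)/8$.

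Finally I would add the two estimates and check the arithmetic:
\[
\PP\{|S^x(1)-S^y(1)|>d\}\le \frac{\beta_0}{2}+\frac{2-\beta_0}{8}=\frac{3\beta_0+2}{8}<\frac{2+\beta_0}{4}=\frac12+\frac{\beta_0}{4},
\]
the strict inequality holding precisely because $\beta_0<2$; this is where the numerical constant $8$ in \eqref{e:Lam2Con} is calibrated. The step I expect to be the main obstacle is the moment computation of $\E[\delta_0]$: one must first secure the exponential integrability $\E[e^{\|F\|_{Lip}\tau_1}]<\infty$ — this is exactly where (A3), i.e.\ ``$(z(t))$ has sufficiently many jumps bigger than $K$'', enters, guaranteeing $\gamma_K>\|F\|_{Lip}$ and the clean factor $2$ — and then match the resulting bound to the explicit constants in \eqref{e:Lam2Con}. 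The term $\E[e^{-\lambda_2\tau_1}]$, which decays exponentially in $\lambda_2$ while the target $\|F\|_{Lip}/(\lambda_2+\|F\|_{Lip})$ decays only like $1/\lambda_2$, forces $\lambda_2$ to be large; this is consistent with, and absorbed by, the standing hypothesis \eqref{e:Lam2Con}.
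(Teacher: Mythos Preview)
Your proof is correct and follows essentially the same approach as the paper: split the event, bound the ``coupling fails'' piece by $\beta_0/2$, and control $\PP\{\delta_0>d/M\}$ via Markov's inequality and the explicit computation of $\E[\delta_0]$ combined with condition~\eqref{e:Lam2Con}. The only cosmetic difference is that the paper phrases the first split through Proposition~\ref{p:SquKOrd} (i.e.\ the event $\{|S^x(1)-S^y(1)|>\delta_0|x-y|,\ \delta_0\le d/M\}$) rather than directly through $\{\xi_x\neq\xi_y\}$, and it does not isolate the intermediate bound $\E[e^{-\lambda_2\tau_1}]\le \|F\|_{Lip}/(\lambda_2+\|F\|_{Lip})$ as you do---but the resulting estimate and the use of~\eqref{e:Lam2Con} are identical.
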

\begin{proof}
It is easy to have
\Bes
\begin{split}
\PP\left\{|S^{x}(1)-S^{y}(1)|>d\right\} &\le \PP\left\{|S^{x}(1)-S^{y}(1)|>\delta_0 |x-y|, \ \delta_0 \le d/M\right\}+\PP\left\{\delta_0>d/M\right\}
\end{split}
\Ees
where $\delta_0$ and $\kappa$ are defined in Proposition \ref{p:SquKOrd}. This inequality,
together with Proposition \ref{p:SquKOrd}, Markov inequality, implies
\Bes
\begin{split}
\PP\left\{|S^{x}(1)-S^{y}(1)|>d\right\} &\le \beta_0/2+M\E_{(x,y)} [\delta_0]/d\\
& \le \beta_0/2+M/d \left[\re^{-\lambda_2 T} \frac{\gamma_K}{\gamma_K+\lambda_2}+\frac{ 2\re^{\|F\|_{{\rm Lip}} T} \|F\|_{{\rm Lip}}}{\lambda_2+\|F\|_{{\rm Lip}}}\right].
\end{split}
\Ees
As $\lambda_2$ is sufficiently large, we get the desired inequality.
\end{proof}
\vskip 3mm

\begin{proof} [Proof of Theorem \ref{t:SigDEst}]
It suffices to show that
for every $p \in (0,\alpha)$, there exist some $\gamma>0$ and $C>0$ depending on
$p, \lambda, \|F\|_1, K$, so that
\Be \label{e:Taud>K}
\PP_{(x,y)} \left\{\sigma=k\right\} \le C\re^{-\gamma k} (1+|x|^p+|y|^p), \ \ \forall k>0, \ \ \forall x,y \in \R^2;
\Ee
and
\Be \label{e:TaudInf}
\PP_{(x,y)} \left\{\sigma=\infty\right\}=0.
\Ee
\vskip 2mm

\emph{Step
1}. Write $\tl \sigma _0=0$, define
\begin{\eqn*}
\tl \sigma_{k}=\inf\{j> \tl \sigma_{k-1}+1; |S^x(j)|+|S^y(j)| \leq M \}, \ \ \ k \in \N.
\end{\eqn*}
Since $(S(k))_{k \ge 0}$ is a discrete time
Markov chain, it is strong Markovian. Therefore,
it follows from Theorem \ref{l:TauPro} that
\begin{\eqn} \label{e:TauKK-1}
\E_{S(\tl \sigma_k)}\left[\re^{\tl \vartheta (\tl \sigma_{k+1}-\tl \sigma_k-1)}\right] \leq C (1+|S(\tl \sigma_k)|^{p}) \leq
C(1+M^{p}).
\end{\eqn}
where $C, \tl \vartheta$ depends on $\lambda, p, M, T, \|F\|_0$. The above inequality, together with strong Markov
property, implies
\begin{\eqn} \label{e:TauKEst}
\begin{split}
\E_{(x,y)}[\re^{\tl \vartheta \tl \sigma_k}] &=\E_{(x,y)} \left[\re^{\tl \vartheta \tl \sigma _1}\E_{S(\tl \sigma _1)}
\left[\re^{\tl \vartheta (\tl \sigma _2-\tl \sigma _1)}\cdots \E_{S(\tl \sigma _{k-1})}\left[\re^{\tl \vartheta (\tl \sigma _k-\tl \sigma _{k-1})}\right]\cdots \right]\right]\\
& \leq C^k \re^{\tl \vartheta k} (1+M^{p})^{k-1} (1+|x|^{p}+|y|^p).
\end{split}
\end{\eqn}
\vskip 3mm

 \emph{Step 2}. Given any $k \in \N$, define
$$\tl \rho_k=\sup\{j; \ \tl \sigma_j \leq k\}.$$
Clearly, ${\tl \sigma} _{\tl \rho_k+1}>k$ if $\tl \rho_k<\infty$. We have
\begin{\eqn}
\begin{split}
\PP_{(x,y)}(\sigma=k)&=\sum_{j=0}^k \PP_{(x,y)}(\sigma=k, \tl \rho_k=j) \le \sum_{j=0}^l \PP_{(x,y)}(\tl \rho_k=j)+\sum_{j=l+1}^k
\PP_{(x,y)}(\sigma=k, \tl \rho_k=j),
\end{split}
\end{\eqn}
where $l<k$ is to be chosen later. We denote 
$$I_1=\sum_{j=0}^l \PP_{(x,y)}(\tl \rho_k=j),\ \ \ \ I_2=\sum_{j=l+1}^k
\PP_{(x,y)}(\sigma=k, \tl \rho_k=j).$$
\vskip 2mm

\emph{Step 3}. Let us now estimate $I_1$ and $I_2$. By Chebyshev inequality and strong Markov
property, we have
\begin{\eqn*}
\begin{split}
\PP_{(x,y)}(\tl \rho_k=j) &\leq \PP_{(x,y)}\left(\tl \sigma_j \leq k/2, \ \tl \rho_k=j \right)
+\PP_{(x,y)} \left(\tl \sigma_j>k/2\right)
 \\
&\leq \PP_{(x,y)} \left(\tl \sigma_j \leq k/2, \ \tl \sigma _{j+1}>k\right)+\PP_{(x,y)} \left(\tl \sigma_j>k/2\right)
\\
& \leq \E_{(x,y)} \left[\PP_{S(\tl \sigma _j)} \left(\tl \sigma _{j+1}-
\tl \sigma_j>k/2\right) \right]+\re^{-\tl \vartheta k/2} \E_{(x,y)}[\re^{\tl \vartheta \tl \sigma _j}].
\end{split}
\end{\eqn*}
By \eqref{e:TauKK-1} and \eqref{e:TauKEst}, the above inequality
implies
\begin{\eqn*}
\begin{split}
\PP_{(x,y)}(\tl \rho_k=j) \leq C \re^{\tl \vartheta}(1+M^{p}) \re^{-\tl \vartheta k/2}+C^j \re^{j \tl \vartheta} 
(1+M^{p})^{j-1} (1+|x|^{p}+|y|^p)\re^{-\tl \vartheta
k/2}.
\end{split}
\end{\eqn*}
Hence,
\begin{\eqn}
\begin{split}
 I_1  \leq  (C \re^{\tl \vartheta})^{l+2} (1+M^{p})^{l+2} (1+|x|^{p}+|y|^p) \re^{-\tl \vartheta k/2}.
\end{split}
\end{\eqn}

\noindent Next we estimate $I_2$. For $j \in \N$, define
$$A_j:=\left\{|S^x(\tl \sigma _1+1)-S^y(\tl \sigma _1+1)|>d, \ldots, |S^x(\tl \sigma _{j}+1)-S^y(\tl \sigma _{j}+1)|>d\right\}.$$
By the definitions of $\sigma$ and
$\tl \rho_k$, strong Markov property, we have
\begin{\eqn*}
\begin{split}
\PP_{(x,y)}\left(\sigma=k, \tl \rho_k=j\right) \le \PP_{(x,y)}(A_{j-1})&=\PP_{(x,y)}\left\{|S^x(\tl \sigma _{j-1}+1)-S^y(\tl \sigma _{j-1}+1)|>d, A_{j-2}\right\} \\
&=\E_{(x,y)}\left\{\PP_{u}\left\{|S^{u_x}(1)-S^{u_y}(1)|>d\right\} A_{j-2}\right\},
\end{split}
\end{\eqn*}
where $u=S^{x,y}(\tl \sigma_{j-1})$.
Combining with Lemma \ref{l:Sx-Sy<12}, the above inequality implies
\begin{\eqn*}
\PP_{(x,y)}\left(\sigma=k, \tl \rho_k=j\right)  \le  \left(\frac 12+\frac{\beta_0} 4\right) \PP_{(x,y)} (A_{j-2}) \le  \left(\frac 12+\frac{\beta_0} 4\right)^{j-1}
\end{\eqn*}
Hence,
\begin{\eqn}
I_2 \leq \left(\frac 12+\frac{\beta_0} 4\right)^l\bigg/\left(\frac 12-\frac{\beta_0} 4\right).
\end{\eqn}
Take $l=\e k$, it follows from the bounds of $I_1$ and $I_2$ that
as $\e>0$ is sufficiently small, \eqref{e:Taud>K} follows.
\vskip 1mm

\emph{Step 4:} Let us now show \eqref{e:TaudInf}. Define
\Bes
\tl \rho_\infty=\sup \{j; \tl \sigma_j<\infty\},
\Ees
it is clear that $\tl \sigma_{\tl \rho_\infty+1}=\infty$ if $\tl \rho_\infty<\infty$.
For all $j \in \N \cup \{0\}$, by strong Markov property and Theorem \ref{l:TauPro}
we have
\Be
\PP_{(x,y)} \left(\tl \rho_\infty=j\right)=\E_{(x,y)} \left[\PP_{S(\tl \sigma_j)}  \left(\tl \sigma_{j+1}-\tl \sigma_j=\infty\right)\right]=0.
\Ee
Hence, $$\PP_{(x,y)} \left(\tl \rho_\infty=\infty\right)=1 \ \ \ \forall \  x, y \in \R^2.$$
By a similar computation as estimating $I_2$ in step 3, we have
\Bes
\PP_{(x,y)} \left(\sigma=\infty\right)=\PP_{(x,y)} \left(\sigma=\infty, \tl \rho_\infty=\infty\right)
\le \PP_{(x,y)}(A_j) \le \left(\frac 12+\frac{\beta_0} 4\right)^j \rightarrow 0, \ \ j \rightarrow \infty.
\Ees
\end{proof}

\section{Proof of the main theorem} \label{s:PfMaiThm}
Define
\Be \label{d:SigXY}
\hat \sigma (x,y):=\inf\{k \ge 1; |S^x(k)-S^y(k)|>(\delta_0 \dots \delta_{k-1}) |x-y|\}
\Ee
where $\delta_j$ ($j=0, ..., k-1$) are defined in Proposition \ref{p:SquKOrd}, we shall often
write $\hat \sigma=\hat \sigma(x,y)$ in shorthand.

\begin{lem} \label{l:SigInf}
If $|x-y| \le d$ with $0<d<\left(\frac 1{4 \kappa}\right)^{1/\beta_2}$ and $\kappa$ defined in
Proposition \ref{p:SquKOrd}, as $\lambda_2>0$ is sufficiently large, depending on $T,K,\|F\|_{{\rm Lip}}, \beta$,
we have
$$\PP_{(x,y)} (\hat \sigma=\infty)>1/2.$$
Moreover, there exists some $\epsilon, C>0$ depending on
$d,\lambda, \|F\|_1, K$ such that
$$\E_{(x,y)} [\re^{\epsilon \hat \sigma} 1_{\{\hat \sigma<\infty\}}] \le C.$$
\end{lem}
\begin{proof}
For all $k \ge 0$, define
$$B_k:=\left\{|S^x(k+1)-S^y(k+1)|>\delta_k |S^x(k)-S^y(k)|\right\},$$
$$C_k:=\left\{|S^x(j+1)-S^y(j+1)| \le (\delta_0 \dots \delta_j) |x-y|, \ \ 0 \le j \le k\right\},$$
it is easy to see that $C_k \supset B^c_k \cap C_{k-1}.$
It follows from Proposition \ref{p:SquKOrd} that
\Bes
\begin{split}
\PP_{(x,y)}\left(C_k\right)& \ge \PP_{(x,y)}\left(B^c_k \cap C_{k-1}\right)=\E_{(x,y)} \left[\PP \left(B^c_k\big |S^{x,y}(k)\right) 1_{C_{k-1}}\right] \\
&=\E_{(x,y)} \left\{\left[1-\PP \left(B_k \big |S^{x,y}(k)\right)\right] 1_{C_{k-1}}\right\} \ge \E_{(x,y)} \left[\left(1-\kappa |S^x(k)-S^y(k)|^{\beta_2}\right) 1_{C_{k-1}}\right] \\
&\ge \PP_{(x,y)}(C_{k-1})-\kappa \E_{(x,y)}\left[(\delta_0 \dots \delta_{k-1})^{\beta_2}\right] |x-y|^{\beta_2}.
\end{split}
\Ees
This inequality, together with \eqref{e:TauDen}, \eqref{e:TauKInd}, (A4) of Assumption \ref{a:}, implies that
\Bes
\begin{split}
\PP_{(x,y)}\left(C_k\right) & \ge \PP_{(x,y)}(C_{k-1})-\kappa \theta^k |x-y|^{\beta_2} \\
& \ge \PP_{(x,y)}(C_{k-2})-\kappa \theta^{k-1} |x-y|^{\beta_2}-\kappa \theta^k |x-y|^{\beta_2} \\
& \ge 1-\kappa \frac{1-\theta^{k+1}}{1-\theta}|x-y|^{\beta_2}>1/2
\end{split}
\Ees
where 
$$\theta=\frac{\gamma_K}{\gamma_K+\lambda_2 \beta_2} {\rm e}^{-\lambda_2 \beta_2 T}+2\left(\frac{\|F\|_{{\rm Lip}}}{\|F\|_{{\rm Lip}}+\lambda_2}\right)^{\beta_2}<\frac 12, $$
as long as $\lambda_2>0$ is sufficiently large. Thus, we get the first inequality.

Defining $D_k:=\{|S^x(k+1)-S^y(k+1)|>(\delta_0 \dots \delta_k) |x-y|\}$ for all $k \ge 0$,
by a similar calculation as above we have
\Bes
\begin{split}
\PP_{(x,y)}(\hat \sigma=k)&=\PP_{(x,y)}\big(D_{k-1} \cap C_{k-2}\big) \le \PP_{(x,y)}\big(B_{k-1} \cap C_{k-2}\big) \\
&=\E_{(x,y)}\left[\PP\left(B_{k-1}\big | S^{x,y}(k-1)\right) 1_{C_{k-2}}\right]\le \kappa \E_{(x,y)}\big[(\delta_0 ... \delta_{k-2})^{\beta_2}\big] |x-y|^{\beta_2} \\
&\le \frac{1}{2} \theta^{k-1}  \le (\frac12)^k.
\end{split}
\Ees
This immediately implies the second inequality.
\end{proof}
\vskip 2mm

Define
\Be \label{d:SigDag}
\sigma^{\dag}(x,y,d):=\sigma+\hat \sigma(S^{x,y}(\sigma))
\Ee
where $\sigma=\sigma(x,y,d)$ is defined by \eqref{d:SigD}. Further define
\Be \label{d:BarSig}
\bar \sigma(x,y,d,M):=\sigma^\dag+\tl \sigma(S^{x,y}(\sigma^\dag),M).
\Ee
where $\sigma^\dag=\sigma^\dag(x,y,d)$ and $\tl \sigma$ are defined in \eqref{d:SigM}.

The motivation for defining $\bar \sigma$ is the following: we only know
$|S^x(\sigma^\dag)-S^y(\sigma^\dag)| \le d$, but have no idea about
the bound of $|S^x(\sigma^\dag)|+|S^y(\sigma^\dag)|$.
This bound is very important for iterating a stopping time argument as in Step 1 of
the proof of Theorem \ref{t:SigDEst}. To this aim, we introduce \eqref{d:BarSig}
and thus have
\Be \label{e:SBarSigM}
|S^{x,y}(\bar \sigma)| \le M \ \ \ \ \ \forall x,y \in \R^2.
\Ee

\begin{lem} \label{l:BarSigEst}
Let $0<d<\left(\frac 1{4 \kappa}\right)^{1/\beta_2}$ and $p \in (0, \alpha)$. There exist some $\gamma, C>0$
depending on $d,\lambda, \|F\|_1, p, M, K$ such that
\Bes
\E_{(x,y)}\left[\re^{\gamma \bar \sigma (x,y,d,M)} 1_{\{\bar \sigma(x,y,d,M)<\infty\}}\right] \le C(1+|x|^p+|y|^p).
\Ees
\end{lem}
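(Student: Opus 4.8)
The plan is to write $\bar\sigma$ as a sum of three stopping times and peel them off one at a time with the strong Markov property, using Theorem~\ref{l:TauPro}, Theorem~\ref{t:SigDEst} and Lemma~\ref{l:SigInf} for the respective pieces, while carrying along the weight $W_k:=1+|S^x(k)|^p+|S^y(k)|^p$. First I would record a one-step drift estimate for $W$: by Proposition~\ref{p:CouChn}, part~(1) of Lemma~\ref{l:SolEst}, and the law \eqref{e:TauKInd} of $\tau_{k+1}-\tau_k$, one gets $\E[W_{k+1}\,|\,\mcl F_k]\le\rho W_k+C_0$ with $\rho=(3^{p-1}\vee1)\,\E[e^{-p\lambda_1\tau_1}]<1$, where $(\mcl F_k)$ is the natural filtration of $(S^{x,y}(k))_{k\ge0}$. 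This is exactly the contraction used for Theorem~\ref{l:TauPro} (see \eqref{e:C1EeSma}), and it is where the standing hypothesis $T>T_0$ enters.

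The main device, which I would isolate as a small sublemma, is the following. For a stopping time $\tau$ put $a_m:=\E_{(x,y)}[1_{\{\tau\ge m\}}W_m]$. Since $\{\tau\ge m\}=\{\tau\le m-1\}^c\in\mcl F_{m-1}$, conditioning $W_m$ on $\mcl F_{m-1}$ and using the drift gives $a_m\le\rho\,a_{m-1}+C_0\,\PP_{(x,y)}(\tau\ge m)$. Hence, whenever $\tau$ has a geometric tail $\PP_{(x,y)}(\tau\ge m)\le\Lambda\,b^m$ with $b<1$, the recursion yields $a_m\le C(W_0+\Lambda)(\rho\vee b)^m$, and therefore
\[
\E_{(x,y)}[e^{\gamma\tau}1_{\{\tau<\infty\}}W_\tau]=\sum_{m}e^{\gamma m}\E_{(x,y)}[1_{\{\tau=m\}}W_m]\le\sum_m e^{\gamma m}a_m\le C(W_0+\Lambda)
\]
as soon as $\gamma<\log\frac{1}{\rho\vee b}$. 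The point of routing through this recursion, rather than splitting $e^{\gamma\tau}$ and $W_\tau$ by H\"older, is that it keeps the estimate \emph{linear} in $W_0=1+|x|^p+|y|^p$: a H\"older split would pay the factor $(1+|x|^p+|y|^p)$ twice and produce a superlinear power, destroying the claim. Securing this linear dependence is the crux of the argument.

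With these ingredients the proof assembles quickly. Recall $\bar\sigma=\sigma^\dag+\tl\sigma(S^{x,y}(\sigma^\dag),M)$, and that $\tl\sigma<\infty$ a.s. by Theorem~\ref{l:TauPro}, so $\{\bar\sigma<\infty\}=\{\sigma^\dag<\infty\}$. Conditioning at $\sigma^\dag$ via the strong Markov property and Theorem~\ref{l:TauPro} (with $\gamma\le\tl\vartheta$) gives $\E_{(x,y)}[e^{\gamma\bar\sigma}1_{\{\bar\sigma<\infty\}}]\le C\,\E_{(x,y)}[e^{\gamma\sigma^\dag}1_{\{\sigma^\dag<\infty\}}W_{\sigma^\dag}]$. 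Since $\sigma^\dag=\sigma+\hat\sigma(S^{x,y}(\sigma))$, I next condition at $\sigma$ and am left with $C\,\E_{(x,y)}[e^{\gamma\sigma}\,h(S^{x,y}(\sigma))]$, where $h(u):=\E_u[e^{\gamma\hat\sigma}1_{\{\hat\sigma<\infty\}}W_{\hat\sigma}]$. On $\{|u_x-u_y|\le d\}$ — which holds at $u=S^{x,y}(\sigma)$ by definition of $\sigma$ — the computation in the proof of Lemma~\ref{l:SigInf} supplies the pure geometric tail $\PP_u(\hat\sigma\ge m)\le C(\tfrac12)^m$, so the sublemma with $\Lambda=C$ gives $h(u)\le C\,W(u)$. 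Finally I apply the sublemma once more to the outer stopping time $\sigma$: here \eqref{e:Taud>K} furnishes $\PP_{(x,y)}(\sigma\ge m)\le C e^{-\gamma_0 m}(1+|x|^p+|y|^p)$, i.e. $\Lambda=C\,W(x,y)$, whence $\E_{(x,y)}[e^{\gamma\sigma}W_\sigma]\le C(1+|x|^p+|y|^p)$. Taking $\gamma$ below $\tl\vartheta$ and below all the thresholds $\log\frac{1}{\rho\vee b}$ produced above yields the lemma.

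I expect the only genuinely delicate points to be (a) checking that $\sigma^\dag$ is a stopping time of $(\mcl F_k)$ — which follows from $\{\sigma^\dag=m\}=\bigcup_{j<m}\{\sigma=j\}\cap\{\hat\sigma(S^{x,y}(j))=m-j\}\in\mcl F_m$ — so that the strong Markov peeling at $\sigma^\dag$ and at $\sigma$ is legitimate; and (b) making sure that the recursion tracks $W$ with the \emph{same} exponent $p$ throughout, which is precisely what lets me avoid any moment of order larger than $p$ and stay inside the admissible range $p\in(0,\alpha)$. Everything else is bookkeeping of constants.
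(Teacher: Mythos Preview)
Your proof is correct and takes a genuinely different route from the paper's. The paper also peels off $\tl\sigma$, $\hat\sigma$, $\sigma$ by the strong Markov property, but it handles the weight $W_{\sigma^\dag}$ by a H\"older (Cauchy--Schwarz) split: it invokes Theorem~\ref{l:TauPro} with exponent $p/2$ rather than $p$, obtaining $\E_u[e^{\gamma\tl\sigma}]\le C(1+|u|^{p/2})$, and then splits $(1+|u|^{p/2})\,e^{\gamma\sigma^\dag}$ by Cauchy--Schwarz against the separate bound $\E|S^{x,y}(\sigma^\dag)|^p\le C(1+|x|^p+|y|^p)$ coming from Lemma~\ref{l:SolEst}. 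The two square roots recombine to a single factor $1+|x|^p+|y|^p$, so linearity is preserved; thus your remark that a H\"older split would ``destroy the claim'' is not quite accurate---the $p/2$ trick rescues it. Your approach instead tracks $W_k$ through stopping times via the drift recursion $a_m\le\rho\,a_{m-1}+C_0\,\PP(\tau\ge m)$, which is arguably cleaner because it never leaves the exponent $p$ and makes the linear dependence on $W_0$ transparent at each stage, at the cost of isolating and proving the sublemma. Both arguments rely on the same three ingredients (Theorems~\ref{l:TauPro}, \ref{t:SigDEst}, Lemma~\ref{l:SigInf}) and on the one-step contraction \eqref{e:C1EeSma}; they differ only in how they combine the moment bound with the exponential tail.
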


\begin{proof}
Note that $\sigma<\infty$ a.s. by Theorem \ref{t:SigDEst}.
By the strong Markov property we have
\Bes
\E_{(x,y)} \left[\re^{\gamma \sigma^{\dag} (x,y,d)} 1_{\{\sigma^{\dag}(x,y,d)<\infty\}}\right]=\E_{(x,y)}\left\{\E_u[\re^{\gamma \hat \sigma} 1_{\{\hat \sigma<\infty\}}] \ e^{\gamma \sigma} \right\}
\Ees
where $\sigma=\sigma(x,y,d)$, $u=S^{x,y}(\sigma)$, $\hat \sigma=\hat \sigma(S^{x,y}(\sigma))$.
\vskip 2mm
By (2) of Lemma \ref{l:SigInf} and Theorem \ref{t:SigDEst}, as $\gamma>0$ is sufficiently small we immediately get
\Be \label{e:DagSigEst}
\E_{(x,y)} \left[\re^{\gamma \sigma^{\dag} (x,y,d)} 1_{\{\sigma^{\dag}(x,y,d)<\infty\}}\right]  \le C(1+|x|^p+|y|^p)
\Ee
where $C$ is some constant depending on $d,\lambda, \|F\|_1, p, K$.

\vskip 2mm

By strong Markov property and the above inequality, we have
\Be \label{e:BarSigSM}
\begin{split}
\E_{(x,y)}\left[\re^{\gamma \bar \sigma (x,y,d,M)} 1_{\{\bar \sigma(x,y,d,M)<\infty\}}\right] &\le \E_{(x,y)}\left\{\E_u \left[\re^{\gamma \tl \sigma (u,M)}\right] \re^{\gamma \sigma^\dag (x,y,d)} 1_{\{\sigma^\dag(x,y,d)<\infty\}} \right\}  \\
& \le C \E_{(x,y)}\left[(1+|u|^{p/2}) \re^{\gamma \sigma^\dag (x,y,d)} 1_{\{\sigma^\dag(x,y,d)<\infty\}}\right]
\end{split}
\Ee
where $u=S^{x,y}(\sigma^\dag)$ and $C$ depends on $M, \lambda, \|F\|_1,  p, K$.
Note from Lemma \ref{l:SolEst} that
$$\E |S^{x,y}(\sigma^\dag)|^{p}=\E |X^{x}(\tau_{\sigma^\dag})|^{p}+\E |X^{y}(\tau_{\sigma^\dag})|^{p} \le C(1+|x|^p+|y|^{p}).$$
The inequality \eqref{e:BarSigSM}, together with H\"{o}lder inequality and \eqref{e:DagSigEst}, immediately implies the desired inequality
as $\gamma>0$ is sufficiently small.
\end{proof}
\vskip 2mm

Define $\bar \sigma_0=0$, for all $k \ge 0$ we define
$$\bar \sigma_{k+1}=\bar \sigma_k+\bar \sigma(S^{x,y}(\bar \sigma_k), d, M).$$

\begin{lem} \label{l:BarSigK}
For all $x,y \in \R^2$, we have
\Be
\PP_{(x,y)} \left(\bar \sigma_k<\infty\right) \le 1/2^k, \ \ k \in \N.
\Ee
\end{lem}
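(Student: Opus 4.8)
The plan is to prove the bound by induction on $k$, the driving mechanism being a bound $\PP_{(u,v)}(\bar\sigma(u,v,d,M)<\infty)\le 1/2$ that holds \emph{uniformly} over all starting points, combined with the strong Markov property of the chain $(S(k))_{k\ge 0}$.

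The first and essential step is to establish that uniform bound: for every $(u,v)\in\R^2\times\R^2$,
\[
\PP_{(u,v)}\big(\bar\sigma(u,v,d,M)<\infty\big)\le \tfrac12.
\]
Since $\bar\sigma=\sigma^\dag+\tl\sigma(S^{u,v}(\sigma^\dag),M)$ with the last summand nonnegative, finiteness of $\bar\sigma$ forces finiteness of $\sigma^\dag=\sigma+\hat\sigma(S^{u,v}(\sigma))$, so $\{\bar\sigma<\infty\}\subseteq\{\sigma^\dag<\infty\}$. By Theorem \ref{t:SigDEst} we have $\sigma<\infty$ almost surely, whence $\{\sigma^\dag<\infty\}=\{\hat\sigma(S^{u,v}(\sigma))<\infty\}$ up to a null set, and the strong Markov property at the stopping time $\sigma$ gives
\[
\PP_{(u,v)}(\sigma^\dag<\infty)=\E_{(u,v)}\big[\PP_{S^{u,v}(\sigma)}(\hat\sigma<\infty)\big].
\]
By the definition of $\sigma$ the point $S^{u,v}(\sigma)$ satisfies $|S^u(\sigma)-S^v(\sigma)|\le d$, so part (1) of Lemma \ref{l:SigInf} applies and yields $\PP_{S^{u,v}(\sigma)}(\hat\sigma=\infty)>1/2$, i.e. $\PP_{S^{u,v}(\sigma)}(\hat\sigma<\infty)<1/2$. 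Integrating, $\PP_{(u,v)}(\bar\sigma<\infty)\le\PP_{(u,v)}(\sigma^\dag<\infty)<1/2$, as desired.

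With this in hand the induction is routine. The base case $k=1$ is precisely the uniform bound applied at $(x,y)$, namely $\PP_{(x,y)}(\bar\sigma_1<\infty)=\PP_{(x,y)}(\bar\sigma(x,y,d,M)<\infty)\le 1/2$. For the inductive step, because $\bar\sigma_{k+1}=\bar\sigma_k+\bar\sigma(S^{x,y}(\bar\sigma_k),d,M)$ with the added term nonnegative we have $\{\bar\sigma_{k+1}<\infty\}\subseteq\{\bar\sigma_k<\infty\}$; conditioning at the stopping time $\bar\sigma_k$ via the strong Markov property and inserting the uniform bound gives
\[
\PP_{(x,y)}(\bar\sigma_{k+1}<\infty)=\E_{(x,y)}\Big[1_{\{\bar\sigma_k<\infty\}}\,\PP_{S^{x,y}(\bar\sigma_k)}\big(\bar\sigma(\cdot,d,M)<\infty\big)\Big]\le\tfrac12\,\PP_{(x,y)}(\bar\sigma_k<\infty)\le\tfrac1{2^{k+1}},
\]
the last inequality being the inductive hypothesis.

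The only point requiring care is that the $1/2$ bound is genuinely uniform in the starting point $(u,v)$, since the later iterates $S^{x,y}(\bar\sigma_k)$ carry no a priori control on the distance $|S^x-S^y|$. The reason it works is structural: the construction of $\bar\sigma$ first waits for $\sigma$, which returns the coupled chain to the distance-$d$ regime where Lemma \ref{l:SigInf}(1) is valid, so no hypothesis on $(u,v)$ is needed. Everything else is bookkeeping with the strong Markov property, and no new quantitative estimates enter.
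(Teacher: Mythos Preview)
Your proof is correct and follows essentially the same route as the paper: first establish the uniform bound $\PP_{(u,v)}(\bar\sigma<\infty)\le 1/2$ via $\sigma<\infty$ a.s., the strong Markov property at $\sigma$, and Lemma~\ref{l:SigInf}(1), then iterate using the strong Markov property at $\bar\sigma_k$. The paper phrases the first step as $\PP_{(x,y)}(\bar\sigma=\infty)=\E_{(x,y)}[\PP_{S^{x,y}(\sigma)}(\hat\sigma=\infty)]>1/2$ (which implicitly also uses $\tl\sigma<\infty$ a.s.\ from \eqref{e:TlSig=Inf}), whereas you use the containment $\{\bar\sigma<\infty\}\subseteq\{\sigma^\dag<\infty\}$; these are equivalent formulations of the same argument.
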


\begin{proof}
It follows from Theorem \ref{t:SigDEst} that $\sigma<\infty$ a.s..
By the definition of $\bar \sigma$, strong Markov property, Lemma \ref{l:SigInf}, for all $x,y \in \R^2$ we have
$$\PP_{(x,y)} (\bar \sigma=\infty) = \E_{(x,y)} \left[\PP_{S^{x,y}(\sigma)}\left(\hat \sigma=\infty\right)\right]>1/2.$$
This, together with strong Markov property, implies that as $\bar \sigma_{k-1}<\infty$,
\Bes
\PP_{u} (\bar \sigma_{k}-\bar \sigma_{k-1}=\infty)>1/2,
\Ees
where $u=S^{x,y}(\bar \sigma_{k-1})$. Hence,
\Bes
\begin{split}
& \ \ \ \PP_{(x,y)} \left(\bar \sigma_k<\infty\right)=\PP_{(x,y)} \left(\bar \sigma_k<\infty, \bar \sigma_{k-1}<\infty\right) \\
& \le \E_{(x,y)} \left[\PP_u \left(\bar \sigma_k-\bar \sigma_{k-1}<\infty\right) 1_{\{\bar \sigma_{k-1}<\infty\}}\right] \le \frac 12 \PP_{(x,y)} \big(\bar \sigma_{k-1}<\infty \big) \\
& \le \frac 14 \PP_{(x,y)} \big(\bar \sigma_{k-2}<\infty\big) \le ... \le \frac 1{2^k}.
\end{split}
\Ees
\end{proof}

\begin{proof} [Proof of Theorem \ref{t:MaiThm}]
The existence of invariant measures has been established in \cite{PXZ10}.
According to Section 2.2. of \cite{Shi08}, the inequality \eqref{2.5} in the theorem implies
the uniqueness of the invariant measure. So now we only need to show \eqref{2.5}, by
\cite{Shi08} again,
it suffices to show that for all $p \in (0, \alpha)$ we have
\Be \label{e:EquInqMai}
 |P_tf(x)-P_t f(y)| \leq C \re^{-ct}\|f\|_1 (1+|x|^{p}+|y|^p) \ \ \ \forall \ f \in L_b(\R^2),
\Ee
where $C,c$ depend on $p,\beta,K, \|F\|_{1}, \lambda$. Let us prove \eqref{e:EquInqMai} by the following four steps.

\vskip 2mm

\emph{Step 1}. Let $l \ge 2$ be some natural number to be determined later. We easily have
\Bes
\left|\E[f(X^x(t))]-\E[f(X^y(t))]\right| \le I_1+I_2
\Ees
with
\Bes
\begin{split}
& I_1:=\left|\E \left\{\left[f(X^x(t))-f(X^y(t))\right]1_{\left\{\tau_{l-1}>t\right\}} \right\}\right|, \\
& I_2:=\left|\E \left\{\left[f(X^x(t))-f(X^y(t))\right]1_{\left\{\tau_{l-1} \le t\right\}} \right\}\right|.
\end{split}
\Ees
By \eqref{e:TauDen}, we have
$\E \re^{\tau_j \gamma_K/2}=\re^{\gamma_K T j/2} 2^j$ for all $j \in \N$, thus
\Be \label{e:EstI1Thm}
I_1 \le 2 \|f\|_0 \PP(\tau_{l-1}>t) \le 2 \|f\|_0 \left(2 \re^{\gamma_K T/2}\right)^{l-1} \re^{-\gamma_K t/2}.
\Ee
\vskip 3mm

\emph{Step 2.} Now we estimate $I_2$. Observe
\Be
\begin{split}
I_2 \le I_{2,1}+I_{2,2},
\end{split}
\Ee
where
$$I_{2,1}=\left|\E \left\{\left[f(X^x(t))-f(X^y(t))\right]1_{\left\{\tau_{l-1} \le t<\tau_{l}\right\}} \right\}\right|,$$
$$I_{2,2}=\left|\E \left\{\left[f(X^x(t))-f(X^y(t))\right]1_{\left\{\tau_{l} \le t\right\}} \right\}\right|.$$
By a similar argument as for $I_1$, we have
\Bes
I_{2,1} \le 2 \|f\|_0 \PP(\tau_l>t) \le  2 \|f\|_0 \left(2 \re^{\gamma_K T/2}\right)^{l} \re^{-\gamma_K t/2}.
\Ees
\vskip 3mm
For $I_{2,2}$, by strong Markov property we have
\Bes
I_{2,2}=\left|\E \left[ \left(g(X^x(\tau_l))-g(X^y(\tau_l))\right)1_{\left\{\tau_{l} \le t\right\}}\right]\right|
\Ees
where
$$g(X^x(\tau_l))=\E[f(X^x(t))|X^x(\tau_l)], \ \ \ \ \ g(X^y(\tau_l))=\E[f(X^y(t))|X^y(\tau_l)].$$
By strong Markov property again, on the set $\{\tau_l \le t\}$ we have
$$g(u_x)=\E\left[f(X^{u_x}(t-\tau_l))\right], \ \ \ \ g(u_y)=\E\left[f(X^{u_y}(t-\tau_l))\right],$$
where $u_x=X^x(\tau_l), u_y=X^y(\tau_l)$,
by the third inequality in Lemma \ref{l:SolEst} we further have
\Be \label{e:LipG}
\begin{split}
|g(u_x)-g(u_y)| &\le \E \left[\left|f(X^{u_x}(t-\tau_l))-f(X^{u_y}(t-\tau_l))\right|\right] \\
& \le \|f\|_1 \E \left[\left|X^{u_x}(t-\tau_l)-X^{u_y}(t-\tau_l)\right|\right]  \\
& \le \|f\|_1 \re^{t \|F\|_{\rm Lip}}|u_x-u_y|.
\end{split}
\Ee
By Proposition \ref{p:CouChn} and the easy fact $\|g\|_0 \le \|f\|_0$, we have
\Bes
\begin{split}
I_{2,2} & \le \left|\E \left[g(X^x(\tau_l))-g(X^y(\tau_l))\right]\right|+\left|\E\left\{\left[g(X^x(\tau_l))-g(X^y(\tau_l))\right] 1_{\left\{\tau_l>t\right\}} \right\}\right| \\
& \le \left|\E \left[g(S^x(l))-g(S^y(l))\right]\right|+2\|f\|_0 \PP(\tau_l>t) \\
& \le \left|\E \left[g(S^x(l))-g(S^y(l))\right]\right|+2 \|f\|_0 \left(2 \re^{\gamma_K T/2}\right)^l \re^{-\gamma_K t/2}.
\end{split}
\Ees
where the last inequality is by a similar calculation as for $I_1$.

\vskip 2mm

\emph{Step 3.}
Let $m=[\e l]$ with $0<\e<1/2$ to be determined later. We have
$$\left|\E \left[g(S^x(l))-g(S^y(l))\right]\right|=J_{1}+J_{2},$$
where
\Bes
\begin{split}
 J_{1}=\left|\E \left\{\left[g(S^x(l))-g(S^y(l))\right]1_{\{\bar \sigma_m<\infty\}}\right\}\right|,\ \ \  J_{2}=\left|\E \left\{\left[g(S^x(l))-g(S^y(l))\right]1_{\{\bar \sigma_m=\infty\}}\right\}\right|.
\end{split}
\Ees
By the easy fact $\|g\|_0 \le \|f\|_0$ and Lemma \ref{l:BarSigK}, we have
\Be
J_{1} \le 2\|f\|_0 \PP_{(x,y)}\{\bar \sigma_m<\infty\} \le \frac {\|f\|_0}{2^{m-1}} \le \|f\|_0 2^{-\e l+1}.
\Ee
Observe
\Bes
\E \left\{\left|g(S^x(l))-g(S^y(l))\right|1_{\{\bar \sigma_m=\infty\}}\right\}=J_{2,1}+J_{2,2}.
\Ees
where
\Bes
\begin{split}
&J_{2,1}=\sum_{i=0}^{m-1}\E \left\{\left|g(S^x(l))-g(S^y(l))\right|1_{\{l/2<\bar \sigma_i<\infty, \ \bar \sigma_{i+1}=\infty\}}\right\}, \\
&J_{2,2}=\sum_{i=0}^{m-1}\E \left\{\left|g(S^x(l))-g(S^y(l))\right|1_{\{\bar \sigma_i \le l/2, \ \bar \sigma_{i+1}=\infty\}}\right\}.
\end{split}
\Ees
By Chebyshev inequality and Lemma \ref{l:BarSigEst}
\Bes
\begin{split}
J_{2,1}  & \le 2\|f\|_0 \sum_{i=0}^{m-1}\PP_{(x,y)}\left\{\frac l2<\bar \sigma_i<\infty\right\} 
\le 2\|f\|_0 \re^{-\frac l 2 \gamma} \sum_{i=0}^{m-1}   \E_{(x,y)}[\re^{\gamma \bar \sigma_i}],
\end{split}
\Ees
and 
\Bes
\begin{split}
\E_{(x,y)}[\re^{\gamma \bar \sigma_i}]&=\E_{(x,y)} \left[\re^{\gamma \bar \sigma_1}\E_{S(\bar \sigma_1)}
\left[\re^{\gamma (\bar \sigma_2-\bar \sigma_1)}\cdots \E_{S(\bar \sigma_{i-1})}\left[\re^{\gamma (\bar \sigma_i-\bar \sigma_{i-1})}\right]\cdots \right]\right]\\
& \leq C^i \re^{\gamma i} (1+M^{p})^{i-1} (1+|x|^{p}+|y|^p), 
\end{split}
\Ees
where the last inequality is by \eqref{e:SBarSigM}.
Hence, 
$$J_{2,1} \le 2\|f\|_0 \re^{-\frac l 2 \gamma} \sum_{i=0}^{m-1} \re^{\gamma i} (1+M^{p})^{i-1} (1+|x|^{p}+|y|^p)$$
Recall $m=[\e l]$, as $\e>0$ is small enough we have
\Bes
J_{2,1} \le \re^{-\frac{\gamma l}4} \|f\|_0 (1+|x|^p+|y|^p).
\Ees
\vskip 2mm
It remains to estimate $J_{2,2}$. Recall the definition of
$\sigma, \hat \sigma, \sigma^\dag, \bar \sigma, \tl \sigma$ and note that
\Be \label{e:BarSigMore}
\bar \sigma_{i+1}=\bar \sigma_i+\sigma+\hat \sigma+\tl \sigma,
\Ee
with
$\sigma=\sigma(S^{x,y}(\bar \sigma_i),d)$,
$\hat \sigma=\hat \sigma(S^{x,y}(\bar \sigma_i+\sigma))$, $\tl \sigma=\tl \sigma(S^{x,y}(\bar \sigma_i+\sigma+\hat \sigma),M)$.
Observe that
\Bes
J_{2,2}=J_{2,2,1}+J_{2,2,2},
\Ees
with
\Bes
\begin{split}
& J_{2,2,1}:=\sum_{i=0}^{m-1}\E \left[\left|g(S^x(l))-g(S^y(l))\right|1_{\{\bar \sigma_i \le l/2, \ \bar \sigma_i+\sigma>\frac{3l}4, \bar \sigma_{i+1}=\infty\}}\right], \\
&J_{2,2,2}:=\sum_{i=0}^{m-1} \E \left[\left|g(S^x(l))-g(S^y(l))\right|1_{\{\bar \sigma_i \le l/2, \bar \sigma_i+\sigma \le \frac{3l}4, \bar \sigma_{i+1}=\infty\}}\right].
\end{split}
\Ees
By strong Markov property, Chebyshev inequality, Theorem \ref{t:SigDEst} and the clear fact $|S^{x,y}(\bar \sigma_i)|\le M$ for all $i \ge 1$, as $\e>0$
is sufficiently small we have
\Be
\begin{split}
J_{2,2,1} & \le 2 \|f\|_0 \sum_{i=0}^{m-1} \E_{(x,y)}  \left[\PP_{u_i}(\sigma>l/4) \right] \\
& \le  C  \|f\|_0 \re^{-\vartheta l/4}\big[(m-1)(1+M^p)+(1+|x|^p+|y|^p)\big] \\
& \le C\|f\|_0 \re^{-\vartheta l/8} (1+|x|^p+|y|^p)
\end{split}
\Ee
where $u_i=S^{x,y}(\bar \sigma_i)$ and $C, \vartheta$ depend on  $d,\lambda,\|F\|_1, p, M$.
\vskip 2mm

As for $J_{2,2,2}$, recall \eqref{e:BarSigMore} and note $\tl \sigma<\infty$ a.s. from Theorem \ref{l:TauPro}, we have
\Be
\begin{split}
J_{2,2,2}
&=\sum_{i=0}^{m-1} \E \left\{\left|g(S^x(l))-g(S^y(l))\right|1_{\{\bar \sigma_i \le l/2, \bar \sigma_i+\sigma \le \frac{3l}4, \hat \sigma+\tl \sigma=\infty\}}\right\} \\
&=\sum_{i=0}^{m-1} \E \left\{\left|g(S^x(l))-g(S^y(l))\right|1_{\{\bar \sigma_i \le l/2, \bar \sigma_i+\sigma \le \frac{3l}4, \hat \sigma=\infty\}}\right\}.
\end{split}
\Ee
 It follows from the above equality, \eqref{e:LipG} and strong Markov property that
\Bes
\begin{split}
J_{2,2,2} & \le \|f\|_1 \re^{t \|F\|_{\rm Lip}} \sum_{i=0}^{m-1} \E\left[|S^x(l)-S^y(l)|
1_{\{\bar \sigma_i+\sigma \le \frac{3l}4, \hat \sigma=\infty\}}\right] \\
& \le \|f\|_1 \re^{t \|F\|_{\rm Lip}} \sum_{i=0}^{m-1} \E\left[|S^x(l)-S^y(l)| 1_{\{\bar \sigma_i+\sigma \le \frac{3l}4, \hat \sigma=\infty\}}\right]
\\
&=\|f\|_1 \re^{t \|F\|_{\rm Lip}} \sum_{i=0}^{m-1} \E\left[\E_{u}\left(|S^x(l)-S^y(l)| 1_{\{\hat \sigma=\infty\}}\right)
1_{\{\bar \sigma_i+\sigma \le \frac{3l}4\}}\right] \\
\end{split}
\Ees
where $u=S^{x,y}(\bar \sigma_i+\sigma)$. By the definition of $\sigma$ we have $|u_x-u_y|<d$. By the definition \eqref{d:SigXY} with $\hat \sigma=\hat \sigma(S^{x,y}(\bar \sigma_i+\sigma))$ and the previous inequality, as $\lambda_2>0$ is sufficiently large, depending on $T,K,\|F\|_{\rm Lip}$, we have
\Bes
\begin{split}
J_{2,2,2}  & \le \|f\|_1  \re^{t \|F\|_{\rm Lip}} \sum_{i=0}^{m-1} \E\left[\E_u(\delta_0 ... \delta_{l/4})|u_x-u_y|\right] \\
& \le  d m\|f\|_1 \re^{t \|F\|_{\rm Lip}}\bigg(\frac{\gamma_K}{\gamma_K+\lambda_2}e^{-\lambda_2 T}+2 \frac{\|F\|_{Lip}}{\|F\|_{Lip}+\lambda_2}\bigg)^{l/4} \\
& \le \e d l\|f\|_1  \re^{t \|F\|_{\rm Lip}}\bigg(\frac 2{\lambda_2}\bigg)^{l/4}.
\end{split}
\Ees

\vskip 2mm

\emph{Step 4}.
Collecting the bounds for $J_{2,2,1}, J_{2,2,2}$, $J_{2,1}$, $J_{1}$, we have that there exist some $\epsilon, C>0$ depending on
$p, \lambda,\|F\|_{1}, K$ such that
\Bes
\begin{split}
I_{2,2} & \le J_1+J_2+2 \|f\|_0 \left(2 e^{\gamma_K T/2}\right)^l \re^{-\gamma_K t/2} \\
& \le J_1+J_{2,1}+J_{2,2,1}+J_{2,2,2}+2 \|f\|_0 \left(2 e^{\gamma_K T/2}\right)^l \re^{-\gamma_K t/2} \\
& \le \|f\|_0 2^{-\e l+1}+C\|f\|_0 \re^{-\vartheta l/8} (1+|x|^p+|y|^p)+\e d l\|f\|_1  \re^{t \|F\|_{\rm Lip}}\bigg(\frac 2{\lambda_2}\bigg)^{l/4}
+2 \|f\|_0 \left(2 \re^{\gamma_K T/2}\right)^l \re^{-\gamma_K t/2}
\end{split}
\Ees
Choosing $l=[\delta t]$ with $\delta>0$ sufficiently small (depending on $p, \lambda,\|F\|_{1}, K, \beta, M$) and then choosing $\lambda_2$ sufficiently large (depending on $p, \lambda,\|F\|_{1}, K, \beta, M, \delta$), we immediately get
$$I_{2,2} \le C_0 \re^{-c_0 t} \|f\|_1(1+|x|^p+|y|^p),$$
where $C_0$ and $c_0$ both depending on $p, \lambda,\|F\|_{1}, K, \beta, M$. 

Combining the estimates of $I_1, I_{2,1}$ and $I_{2,2}$, we immediately get the desired \eqref{e:EquInqMai}.
\end{proof}


\bibliographystyle{amsplain}

\begin{thebibliography}{99}


\bibitem{Ber96}
Jean Bertoin, \emph{L\'evy processes}, Cambridge Tracts in Mathematics, vol.
  121, Cambridge University Press, Cambridge, 1996. \MR{1406564 (98e:60117)}







\bibitem{ChKiSo12}
Zhen-Qing Chen, Panki Kim, Renming Song, and Zoran Vondra{\v{c}}ek,
  \emph{Boundary {H}arnack principle for {$\Delta+\Delta^{\alpha/2}$}}, Trans.
  Amer. Math. Soc. \textbf{364} (2012), no.~8, 4169--4205. \MR{2912450}


\bibitem{EMat01}
Weinan E and Jonathan~C. Mattingly, \emph{Ergodicity for the {N}avier-{S}tokes
  equation with degenerate random forcing: finite-dimensional approximation},
  Comm. Pure Appl. Math. \textbf{54} (2001), no.~11, 1386--1402. \MR{1846802
  (2002g:76075)}


\bibitem{Hai02}
M.~Hairer, \emph{Exponential mixing properties of stochastic {PDE}s through
  asymptotic coupling}, Probab. Theory Related Fields \textbf{124} (2002),
  no.~3, 345--380. \MR{1939651 (2004j:60135)}


\bibitem{HM06}
 Martin Hairer and Jonathan~C. Mattingly, \emph{Ergodicity of the 2{D}
 {N}avier-{S}tokes equations with degenerate stochastic forcing}, Ann. of
 Math. (2) \textbf{164} (2006), no.~3, 993--1032. 


\bibitem{HaiMat11}
Martin Hairer and Jonathan~C. Mattingly, \emph{A theory of hypoellipticity and
  unique ergodicity for semilinear stochastic {PDE}s}, Electron. J. Probab.
  \textbf{16} (2011), no. 23, 658--738. \MR{2786645}

\bibitem{Ku09}
Alexey~M. Kulik, \emph{Exponential ergodicity of the solutions to {SDE}'s with
  a jump noise}, Stochastic Process. Appl. \textbf{119} (2009), no.~2,
  602--632. \MR{2494006 (2010i:60176)}

\bibitem{KukShi02}
Sergei Kuksin, Andrey Piatnitski, and Armen Shirikyan, \emph{A coupling
  approach to randomly forced nonlinear {PDE}s. {II}}, Comm. Math. Phys.
  \textbf{230} (2002), no.~1, 81--85. \MR{1927233 (2003h:37080)}


\bibitem{KS-2001}
S. Kuksin and A.~Shirikyan, \emph{A coupling
  approach to randomly forced nonlinear {PDE}s.~{I}}, Comm. Math. Phys.
  \textbf{221} (2001), no.~2, 351--366.


\bibitem{KS-2002}
\bysame, \emph{Coupling approach to white-forced nonlinear PDEs}, J. Math. Pures Appl. (9)~{\bf81} (2002), no.~6, 567Ð602.

\bibitem{KS10} \bysame,
\emph{Mathematics of 2D Statistical Hydrodynamics}, manuscript  of a book (available on www.u-cergy.fr/shirikyan/book.html)









\bibitem{Jon02}
Jonathan~C. Mattingly, \emph{Exponential convergence for the stochastically
  forced {N}avier-{S}tokes equations and other partially dissipative dynamics},
  Comm. Math. Phys. \textbf{230} (2002), no.~3, 421--462. \MR{1937652
  (2004a:76039)}


\bibitem{nersesyan-2008}
Vahagn Nersesyan, \emph{Polynomial mixing for the complex Ginzburg--Landau equation
perturbed by a random force at random times}, J.\ Evol.\ Equ.~{\bf8} (2008), no.~1, 1Ð29.

\bibitem{Ner11}
Vahagn Nersesyan, \emph{Private communication}, 2011.







\bibitem{PeZa07}
 S. Peszat and J. Zabczyk, \emph{Stochastic partial differential equations with
  {L}\'evy noise}, Encyclopedia of Mathematics and its Applications, vol. 113,
  Cambridge University Press, Cambridge, 2007, An evolution equation approach.



\bibitem{Pr12}
Enrico Priola, \emph{Pathwise uniqueness for singular {SDE}s driven by stable
  processes}, Osaka J. Math. \textbf{49} (2012), no.~2, 421--447. \MR{2945756}

\bibitem{PSXZ11}
E.~Priola, A.~Shirikyan, L.~Xu and J.~Zabczyk,
\emph{Exponential ergodicity and regularity for equations with Lévy noise},
 Stoch. Proc. Appl., \textbf{122}, 1 (2012), 106-133.

\bibitem{PXZ10}
E.~Priola, L.~Xu and J.~Zabczyk, \emph{Exponential mixing
for some SPDEs with L\'evy noise}, Stochastic and Dynamics, 11 (2011), 521-534.

\bibitem{PZ09}
Enrico Priola and Jerzy Zabczyk, \emph{Structural properties of semilinear
  {SPDE}s driven by cylindrical stable processes}, Probab. Theory Related
  Fields \textbf{149} (2011), no.~1-2, 97--137. \MR{2773026}




\bibitem{Shi04}
Armen Shirikyan, \emph{Exponential mixing for 2{D} {N}avier-{S}tokes equations
  perturbed by an unbounded noise}, J. Math. Fluid Mech. \textbf{6} (2004),
  no.~2, 169--193. \MR{2053582 (2005c:37160)}

\bibitem{Shi06}
\bysame, \emph{Ergodicity for a class of {M}arkov processes and applications to
  randomly forced {PDE}'s. {II}}, Discrete Contin. Dyn. Syst. Ser. B \textbf{6}
  (2006), no.~4, 911--926 (electronic). \MR{2223915 (2007b:60185)}

\bibitem{Shi08}
\bysame,
\emph{Exponential mixing for randomly forced partial differential
  equations: method of coupling}, Instability in models connected with fluid
  flows. {II}, Int. Math. Ser. (N. Y.), vol.~7, Springer, New York, 2008,
  pp.~155-188. \MR{2459266 (2009i:35359)}.

\bibitem{Wa}Wang F.Y.: Gradient estimate for Ornstein-Uhlenbeck jump processes. Stoch. Proc. Appl., Vol.121, 3 (2011), 466-478.

\bibitem{Zh}Zhang X.: Derivative formula and gradient estimate for SDEs driven by $\alpha$-stable processes.
arXiv:1204.2630v2.



\end{thebibliography}

\end{document}